\newtheorem{theorem}{Theorem}
\newtheorem{thm}[theorem] {Theorem}
\newtheorem{construction}[theorem]{Construction}
\newtheorem{lem}[theorem]{Lemma}
\newtheorem{definition}[theorem]{Definition}
\newtheorem*{com}{Comment}
\def\Tau{\mathcal{T} }
\def\GG{\mathcal{G}}
\DeclareMathOperator{\diam}{diam}
\title{On homometric sets in graphs}
\author{Maria  Axenovich\\ 
\small Department of Mathematics, \\
\small Iowa State University, \\
\small Ames, IA 50011, USA\\
\small \texttt{email: axenovic@iastate.edu} 
\thanks{This author's research partially supported by NSF grant DMS-0901008}
\and 
Lale \"Ozkahya\\
\small \.Istanbul Bilgi \"University,\\
\small Department of Mathematics, \\
\small Kurtulu\c{s} Deresi Cad. 47,\\
\small 34435 Dolapdere Beyo\u{g}lu \.Istanbul Turkey\\
\small and\\[-0.8ex]
\small Department of Mathematics, \\
\small Iowa State University, \\
\small Ames, IA 50011, USA\\
\small \texttt{email: ozkahya@illinoisalumni.org}}
\begin{document}
\maketitle

\renewcommand{\thefootnote}{\empty}
\footnotetext{\hskip -.6 cm
 \emph{Keywords}: homometric, distance-set, graph.}

\abstract {
For a vertex set $S\subseteq V(G)$ in a graph $G$, 
the {\em distance multiset}, $D(S)$,  is the multiset of pairwise distances 
between  vertices of $S$ in $G$. 
Two vertex sets are called {\em homometric} if their distance multisets are identical. 
For a graph $G$, the largest integer $h$,  such that there are  two disjoint homometric 
sets of order $h$ in $G$,  is denoted by $h(G)$. 
We slightly improve the general bound on this parameter introduced by  Albertson, Pach and Young~\cite{APY}
and investigate it in more detail for trees and graphs of bounded diameter. In particular, we show that 
for any tree $T$ on $n$ vertices $h(T) \geq \sqrt[3]{n}$ and for any graph $G$ of fixed diameter $d$, 
$h(G) \geq cn^{1/ (2d-2)}$. 
}

\section{Introduction}
 
For a vertex set $S\subseteq V(G)$, 
the {\em distance multiset}, $D(S)$,  is the multiset of pairwise distances 
between vertices of $S$ in $G$ .
 We say that two vertex sets 
are {\em homometric} if their distance sets are identical. 
``How large could two disjoint homometric sets be in a graph?" was a question of Albertson, Pach and Young~\cite{APY}.
Formally,  for a graph $G$, the largest integer $h$, such that there are  two disjoint homometric sets $S_1, S_2$ in $G$ with 
$|S_1|=|S_2|=h$, is denoted by $h(G)$. 
For a family of graphs $\GG$, $h(\GG)$ denotes the largest 
value of $h$ such that for each graph $G$ in $\GG$, $h(G) \geq h$. 
Let $h(n)$ be $h(\GG_n)$, where $\GG_n$ is the set of all graphs on $n$ vertices.
In  other words, $h(n) = \min \{h(G): |V(G)|=n\}$.   Albertson, Pach and Young \cite{APY}  provided the most general bounds. 
\begin{thm} [\cite{APY}]
$\frac{c \log{n}}{\log{\log{n}}} < h(n) \le \frac{n}{4}$ for $n > 3$, and a constant $c$.
\end{thm}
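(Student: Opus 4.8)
The plan is to prove the two inequalities independently. For the upper bound one only needs to exhibit, for each $n>3$, a single graph $G_n$ on $n$ vertices with $h(G_n)\le n/4$ (recall $h(G)\le\lfloor n/2\rfloor$ always, so the real point is to beat $n/2$). The natural symmetric candidates all fail: in a clique, path, cycle, complete multipartite graph, or any blow-up of a fixed graph, splitting every part roughly in half yields two disjoint homometric sets of size about $n/2$; similarly a twin class on more than $n/2$ vertices, or a diameter larger than $n/2$, already forces $h(G)\ge n/4$ (split the class, respectively slide an interval along a diametral shortest path). Hence $G_n$ must be taken \emph{metrically rigid}: with only singleton twin classes and no translation-type coincidences among distance multisets, so that $D(S)$ determines $S$ whenever $|S|>n/4$; then two disjoint homometric sets of that size cannot coexist. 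Constructing such a $G_n$ and checking this rigidity is the whole of the upper bound, and $1/4$ is not meant to be optimal.

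For the lower bound I would first treat connected $G$ by a dichotomy on $D=\diam(G)$, using a threshold $D_0$ of order $\frac{\log n}{\log\log n}$. \emph{Large diameter} ($D\ge D_0$): fix a shortest path $v_0,v_1,\dots,v_D$; since $d(v_i,v_j)=|i-j|$ for vertices of a shortest path, the disjoint sets $\{v_0,\dots,v_{t-1}\}$ and $\{v_t,\dots,v_{2t-1}\}$ with $t=\lfloor(D_0+1)/2\rfloor$ have equal distance multisets by translation invariance, so $h(G)\ge t$. \emph{Small diameter} ($D<D_0$): every distance lies in $\{1,\dots,D_0-1\}$, so $D(S)$ for a $k$-set $S$ is a multiset of size $\binom{k}{2}$ over fewer than $D_0$ symbols, and there are at most $\binom{\binom{k}{2}+D_0-1}{D_0-1}\le k^{2D_0}$ of these. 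Colour the $\binom{n}{k}$ $k$-subsets of $V(G)$ by their distance multisets. If no colour class had two disjoint members, each class would be an intersecting family of $k$-sets, hence of size at most $\binom{n-1}{k-1}$ (Erd\H{o}s--Ko--Rado, applicable since $k\le n/2$ in the relevant range; a weaker bound of the form $O(\binom{n}{k}k^2/n)$ would also do), giving $\binom{n}{k}\le k^{2D_0}\binom{n-1}{k-1}$, i.e.\ $n/k\le k^{2D_0}$. So as soon as $k^{2D_0+1}<n$ there exist two disjoint homometric $k$-sets, and one may take $k$ roughly $n^{1/(2D_0+1)}$.

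Choosing $D_0=\frac{\log n}{2\log\log n}$ makes both $D_0/2$ and $n^{1/(2D_0+1)}$ of order $\frac{\log n}{\log\log n}$, so in either case $h(G)\ge c\,\frac{\log n}{\log\log n}$ for an absolute constant $c$. For disconnected $G$: if some component has at least $\sqrt{n}$ vertices, apply the above to it; otherwise $G$ has at least $\sqrt{n}$ components, and picking one vertex from each of $\lfloor\sqrt{n}/2\rfloor$ components for $S_1$ and from $\lfloor\sqrt{n}/2\rfloor$ other components for $S_2$ makes all pairwise distances within each $S_i$ equal to $\infty$, so $h(G)\ge\lfloor\sqrt{n}/2\rfloor$. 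The finitely many small values of $n$ are checked by hand.

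I expect the main obstacle to be the balancing in the lower bound: one must fix $D_0$ and control the counting — the crude estimate $k^{2D_0}$ for the number of distance multisets, the Erd\H{o}s--Ko--Rado step, and the integer rounding in the choice of $k$ — precisely enough that the smaller of $D_0/2$ and $n^{1/(2D_0+1)}$ is still $\Omega(\log n/\log\log n)$; the slack in these estimates is what fixes the constant $c$. On the upper-bound side the corresponding difficulty is simply to write down one concrete graph and prove it is rigid enough to rule out all large disjoint homometric pairs.
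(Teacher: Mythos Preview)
This theorem is \emph{cited} from Albertson--Pach--Young~\cite{APY}; the present paper does not prove it, so there is no proof here to compare against. I can only assess your proposal on its own merits and against the related arguments that \emph{do} appear in the paper.

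Your lower bound is essentially correct. The large-diameter case is exactly Construction~\ref{c-apy} (attributed to~\cite{APY}). For the small-diameter case you use Erd\H{o}s--Ko--Rado to force a colour class with two disjoint $k$-sets; the paper, in its proof of Theorem~\ref{diam-kg}, instead colours the vertices of the Kneser graph $KG(n,k)$ by distance multisets and invokes Lov\'asz's formula $\chi(KG(n,k))=n-2k+2$: if the number of multisets is below $n-2k+2$, some colour class spans an edge, i.e.\ two disjoint homometric $k$-sets. The Kneser bound is slightly sharper ($n-2k+2$ versus your $n/k$), but both give the same $\log n/\log\log n$ order after balancing against the diameter threshold. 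Your treatment of disconnected graphs is fine.

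Your upper bound, however, is not a proof but a wish list. You correctly observe that the obvious symmetric families (cliques, paths, cycles, blow-ups, graphs with large twin classes or large diameter) all have $h(G)\ge n/4$, and you conclude that one needs a ``metrically rigid'' graph in which $D(S)$ determines $S$ for large $S$. But you do not construct such a graph, and that is precisely the content of the upper bound. The actual examples (both in~\cite{APY} and in this paper's Theorem~\ref{upper-bd}, which sharpens the bound to $n/4-c\log\log n$) are built by gluing carefully sized cliques to a long path via a cut vertex; the analysis then uses a separation lemma (Lemma~\ref{flower} here) and a parity/size argument on the clique sizes. None of this is hinted at in your proposal, so the upper-bound half is a genuine gap.
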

 
It is an easy observation that $h(G) = \lfloor |V(G)|/2 \rfloor$ when $G$ is a path or $G$ is a cycle.
However, more is known. Note that the multisets of distances for a vertex subset of
a path corresponds to a multiset of pairwise differences between elements of a subset  of positive integers. 
We shall say that two subsets of integers are homometric if their multisets of pairwise differences coincide.
Among others, Rosenblatt and Seymour~\cite{RS} 
proved that  two multisets  $A$ and $B$ of integers  are homometric if and only if  
there are two multisets $U, V$ of integers such that  $A= U+V$ and $B=U-V$, 
where $U+V$ and $U-V$ are multisets, $U+V= \{u+v: ~ u\in U, ~ v\in V\}$,   $U-V= \{u-v: ~ u\in U, ~ v\in V\}$. 
Lemke, Skiena and Smith~\cite{LSS} showed that if $G$ is a cycle of length $2n$, 
then every subset of $V(G)$ with $n$ vertices and its complement are homometric sets. 
Suprisingly, when the class $\GG$  of graphs under consideration is not a path or a cycle, the problem of finding $h(\GG)$
becomes nontrivial. Even when $\GG$  is a  class of $n$-vertex graphs that are the unions of pairs of paths sharing a single point, $h(\GG)$ 
is not known.   Here, we use standard graph-theoretic terminology, see for example \cite{bollobas} or \cite{west}.

The homometric set problem we consider here has its origins in Euclidean geometry, 
with applications in X-ray crystallography introduced in the 1930's with later applications in restriction site mapping of DNA. 
In particular, the fundamental problem that was considered  is whether one could identify a given set of points from its multiset of distances.
There are several related directions of research in the area, for example the question of 
recognizing the multisets corresponding to  
a multiset  of distances realized by a set of points in the Euclidean space of given dimension, see ~\cite{matousek}.

In this paper,  we provide the new bounds on  $h(\GG)$  in terms of densities and diameter,  we also investigate this function for various classes 
of graphs, in particular for trees.
In Section \ref{Main-Results} we state our main results.  
We give the basic constructions of homometric sets 
in Section \ref{Constructions}.  
We provide the proofs  of the main results in Section~\ref{Proofs}.
Finally, in Section \ref{More-Trees},  we give additional bounds for $h(T)$, when $T$ is a tree in terms of its parameters.

 \section {Main Results} \label{Main-Results}
 
  Let  $\mathcal{T}_n$ the set of all trees on $n$ vertices. 
 A {\it spider} is a tree that is a union of vertex-disjoint paths, called {\it legs} and  
a vertex that is adjacent to one of the endpoints of each leg, called the {\it head}.  
Let $\mathcal{S}_{n,k}$ be the set of $n$-vertex spiders with $k$ legs and 
 $\mathcal{S}_n$ be the set of all $n$-vertex spiders.
A {\em caterpillar} is a tree,  that is a union of a path, called {\it spine}, and leaves adjacent to the spine.
Let  $\mathcal{R}_n$ be the set of all caterpillars on $n$ vertices.  
Finally, a {\it haircomb} is a tree, that consists of a path called the {\it spine} and a collection of vertex-disjoint 
paths, called legs,  that have an endpoint on the spine. 
Let $\mathcal{H}_n$ be the set of haircombs on $n$ vertices.
 Here, we slightly improve the known general bounds of $h(G)$,  provide new bounds on $h(G)$  in terms of 
 density and diameter, and give several results for $h(T)$ in case when $T$ is a tree.

\begin{thm}\label{upper-bd}
For infinitely many values of $n$, and a positive constant $c$, 
$h(n)\le n/4 - c\log{\log{n}}$.
\end{thm}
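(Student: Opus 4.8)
The plan is to refine the Albertson, Pach and Young graph witnessing $h(n)\le n/4$ rather than to build a new family from scratch. The first step is to revisit that construction and, instead of re-deriving only the inequality $h(G^{*})\le n/4$, to record which steps of the argument are tight. What I want to extract is a structural statement: every disjoint homometric pair $S_1,S_2$ in $G^{*}$ with $|S_1|=|S_2|$ equal to the maximum value $\lfloor n/4\rfloor$ is forced into a rigid ``extremal shape''. Concretely, there should be a distinguished set $W\subseteq V(G^{*})$ with $S_1\cup S_2\subseteq V(G^{*})\setminus W$, and on $V(G^{*})\setminus W$ the pair must look like two mirror-image path-segments — this being precisely the regime in which $h$ equals half the number of available vertices, exactly as for genuine paths, where $h(P_m)=\lfloor m/2\rfloor$. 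The content of this step is bookkeeping in the distance multiset $D(S)$: bound how many entries of each value can occur, and show that reaching $\lfloor n/4\rfloor$ leaves no slack except in the path-like part.

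Given the rigidity statement, I would then perturb $G^{*}$ by a small ``obstruction'' costing $t:=\lceil c'\log\log n\rceil$ vertices — either by deleting $t$ vertices from $W$ and re-attaching them asymmetrically, or by splicing a rigid $t$-vertex gadget into the path-like part — chosen so that the extremal mirror-image shape can no longer be realized. Then any homometric pair of the perturbed graph that is too large would, after restriction to the untouched part, produce a disjoint homometric pair in $G^{*}$ of the forbidden extremal shape, a contradiction. The conclusion would be that $h$ of the perturbed graph is at most $\lfloor (n-\Theta(t))/4\rfloor=n/4-c\log\log n$ for a suitable $c$. The ``infinitely many $n$'' is then automatic: the construction is available for every $n$ in the residue classes for which $G^{*}$ is defined, and $t$ may be adjusted by a bounded amount to hit those classes. (A more self-contained variant, if peeling apart the extremal pairs proves awkward, is to build directly a graph that is ``rigid on $\approx n/2+t$ vertices and path-like on $\approx n/2-t$ vertices'', using $h(P_m)=\lfloor m/2\rfloor$ on the path part and the existence of a graph on $n/2+t$ vertices with $h\le(n/2+t)/4<n/4$ for the rigid part.)

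The hard part — in either route — is making the perturbation surgically precise: it must destroy the extremal configuration while not opening up some new hybrid homometric pair that straddles the gadget and recovers size close to $n/4$ by other means. This is exactly the phenomenon that makes $h$ so delicate; as a cautionary example, attaching a long pendant path to a clique lets one graft $\Theta(n)$ clique vertices onto a path-subset and force $h\ge n/2$, so a careless gadget could undo everything. It is also the reason the improvement is only the ``slight'' additive $\log\log n$ and not a constant factor: one can rule out all competing hybrid pairs only so long as the obstruction stays within $O(\log\log n)$ vertices of the extremal set $W$. Pinning down that the required obstruction is this small, and verifying that it genuinely breaks every extremal pair, is where the real work lies.
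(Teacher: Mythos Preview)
Your ``self-contained variant'' --- a graph that is path-like on roughly $n/2-t$ vertices and rigid on roughly $n/2+t$ vertices, with $t\approx\log\log n$ --- is exactly the shape of the paper's construction, so at the level of strategy you are on the right track. But the proposal stops short of the two concrete ingredients that make the argument go through, and without them it is not a proof.

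First, you never say what the rigid gadget is or why $\log\log n$ is the achievable scale. In the paper the rigid part is a disjoint union of cliques of odd sizes $a_1<a_2<\cdots<a_k$ chosen so that $a_i$ exceeds (a constant times) the total number of edges in all smaller cliques; roughly $a_i\gtrsim a_{i-1}^2$. Because the gadget has diameter $2$, two homometric sets inside it must induce the same number of edges, and the tower-like growth of the $a_i$ forces any such balanced split to miss at least one vertex from each clique (odd sizes) --- hence about $k$ vertices in total. The tower growth is precisely what limits $k$ to $\Theta(\log\log n)$; your proposal treats the size of the obstruction as a parameter to be tuned later, but in fact it is dictated by this edge-counting constraint, and there is no indication in your plan of where $\log\log n$ would come from.

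Second, the ``no hybrid pairs'' issue that you correctly flag as the hard part is handled in the paper not by a delicate perturbation analysis but by a clean structural lemma: if $G$ is a \emph{flower} (a graph $H$ with a pendant path $P$ attached at a single vertex $v$), then any homometric pair of size $\ge 2$ lies, up to one or two boundary vertices, entirely in $P$ or entirely in $H\cup\{v\}$. This lemma is short and does all the work of separating the two regimes; your plan to ``restrict to the untouched part'' and invoke rigidity of extremal pairs in $G^{*}$ is much vaguer and, as you yourself note with the clique-plus-path example, genuinely dangerous. The paper's flower lemma is the missing idea here. Your primary route --- perturbing the Albertson--Pach--Young extremal graph and arguing via rigidity of its extremal pairs --- does not obviously converge to this, and you give no mechanism for the rigidity step.
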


\begin{thm}\label{diam-kg}
Let $G$ be a graph 
on $n$ vertices with $e$ edges, diameter $d$, $n\geq 5$, $d\geq 2$.   
If for an integer $k$, $\binom{{k\choose 2}+d-1 }{ d-1} < n - 2k +2$, 
then $$h(G) \ge \max\{k,~d/2, ~\sqrt{2e/n}\}.$$ 
In particular, 
$$h(G)\ge \max\{0.5n^{1/(2d-2)},~d/2, ~\sqrt{2e/n}\}$$
for $n\ge c(d) = d^{2d-2}.$
\end{thm}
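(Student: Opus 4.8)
The plan is to prove the three lower bounds $h(G)\ge k$, $h(G)\ge d/2$, and $h(G)\ge\sqrt{2e/n}$ separately, and then obtain the displayed ``in particular'' estimate by bounding the relevant binomial coefficient. The bound $h(G)\ge d/2$ is immediate: since $\diam(G)=d$ there are vertices $u,w$ with $d(u,w)=d$, and any shortest $u$--$w$ path $P$ (on $d+1$ vertices) is isometric in $G$, because for $a,b\in V(P)$ one has $d(u,a)+d(a,b)+d(b,w)\ge d(u,w)=d=d_P(u,a)+d_P(a,b)+d_P(b,w)$ while each left-hand summand is at most the corresponding right-hand one, forcing $d_G(a,b)=d_P(a,b)$. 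Two ``mirror'' intervals of equal length in a path are homometric, so $h(G)\ge h(P_{d+1})=\lfloor(d+1)/2\rfloor\ge d/2$.

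For $h(G)\ge\sqrt{2e/n}$, let $v$ be a vertex whose degree $\Delta$ is at least the average degree $2e/n$. Any two vertices of $N(v)$ lie at distance $1$ or $2$ in $G$, so for $S\subseteq N(v)$ the multiset $D(S)$ depends only on $|S|$ and the number of edges $G$ induces on $S$; hence two equal-size subsets of $N(v)$ are homometric exactly when $G$ induces the same number of edges on each. It therefore suffices to find two disjoint subsets of $N(v)$ of size $t:=\lceil\sqrt{2e/n}\rceil$ carrying equally many induced edges; this I would do by a sliding-window/monotonicity argument over the vertices of $N(v)$ ordered by induced degree, comparing a size-$t$ window with (part of) its complement, with the few cases where $\Delta$ is too small to run this argument handled directly from $n\ge5$.

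The core is $h(G)\ge k$. A $k$-element set $S$ has $D(S)$ equal to a multiset of $\binom k2$ values from $\{1,\dots,d\}$, so the number of possible values of $D(S)$ is at most $\binom{\binom k2+d-1}{d-1}$, which by hypothesis is less than $n-2k+2$. I would run a pigeonhole argument: fix a ``skeleton'' of $2(k-1)$ vertices split into two disjoint $(k-1)$-sets, and let a single extra vertex range over the remaining $n-2k+2$ vertices; since $n-2k+2$ exceeds the number of distance-multiset types, two choices of the extra vertex produce homometric $k$-sets, and the skeleton is arranged (using one of the explicit homometric-pair constructions in Section~\ref{Constructions} as a building block) so that these two $k$-sets can be realized as \emph{disjoint} sets rather than sets overlapping in $k-1$ vertices. \textbf{The main obstacle is precisely this disjointification:} equality of distance multisets does not by itself give disjointness, and coordinating the two halves of the skeleton so that homometry survives while only $2k-2$ vertices (rather than a factor of $k$) are ``spent'' is the delicate point.

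Finally, for the ``in particular'' statement take $k=\lceil 0.5\,n^{1/(2d-2)}\rceil$, so $\binom k2\le \tfrac12 k^2=O\!\bigl(n^{1/(d-1)}\bigr)$. Using $\binom{M+d-1}{d-1}\le (M+d-1)^{d-1}/(d-1)!$ with $M=\binom k2$, together with the hypothesis $n\ge d^{2d-2}$ (so that $n^{1/(d-1)}\ge d^2$, which lets the additive $d-1$ and the $-2k+2$ be absorbed), one checks $\binom{\binom k2+d-1}{d-1}<n-2k+2$; the first part of the theorem then gives $h(G)\ge k\ge 0.5\,n^{1/(2d-2)}$, and the remaining two entries of the maximum were established above. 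The only arithmetic here is the binomial estimate, which is routine.
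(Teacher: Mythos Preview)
Your argument for $h(G)\ge d/2$ is fine and matches the paper. The serious gap is in your ``core'' step $h(G)\ge k$. The pigeonhole you describe---fix two disjoint $(k-1)$-sets $A,B$ and let one extra vertex $x$ range over the remaining $n-2k+2$ vertices---only tells you that two choices $x,x'$ give $D(A\cup\{x\})=D(A\cup\{x'\})$. These two $k$-sets overlap in $k-1$ vertices, and there is no mechanism that lets you transfer this to a pair of the form $A\cup\{x\}$ and $B\cup\{x'\}$: for that you would need the multiset of distances from $x'$ to $B$ to match the multiset from $x$ to $A$, which is unrelated to $D(A\cup\{x'\})$. Nothing in Section~\ref{Constructions} repairs this, and the specific threshold $n-2k+2$ is a strong hint that a naive pigeonhole cannot be the intended route.

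The paper's proof uses the Kneser graph: colour each vertex of $KG(n,k)$ (i.e.\ each $k$-subset of $V(G)$) by its distance multiset. There are at most $\binom{\binom{k}{2}+d-1}{d-1}$ colours, and by Lov\'asz's theorem $\chi(KG(n,k))=n-2k+2$; so if the number of colours is smaller than $n-2k+2$ the colouring is not proper, which means two \emph{adjacent} vertices of $KG(n,k)$---two \emph{disjoint} $k$-sets---receive the same colour. That is exactly the disjointification you were missing, and it is the whole content of the bound.

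The bound $h(G)\ge\sqrt{2e/n}$ is handled the same way in the paper: pass to the closed neighbourhood $N[v]$ of a vertex of maximum degree, observe that this is an isometric diameter-$2$ subgraph, and run the Kneser argument there with $d=2$. Your sliding-window idea is not obviously wrong, but it is vague (monotonicity of the induced edge count along such windows is not clear), and in any case once you have the Kneser/Lov\'asz lemma in hand it gives this part for free. Your final paragraph on the ``in particular'' estimate is essentially the paper's computation and is fine.
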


\noindent
Since it is well known, see \cite{bollobas}, that almost all graphs $G(n,p)$ have diameter $2$,  
the above theorem implies that for almost every graph $G=G(n,p)$,  $h(G_{n,p}) = \Omega(\sqrt{n})$.

In the following theorems, we omit ceilings and floors for simplicity.

\begin{thm}\label{trees-weaker-bd} 
For a positive integer $n$, $h(\mathcal{T}_n)\ge n^{1/3} -1$.
 \end{thm}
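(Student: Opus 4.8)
The plan is to split on the diameter of $T$; write $k=\lceil n^{1/3}\rceil-1$ (so $k\ge n^{1/3}-1$), and recall the elementary fact that a subtree of a tree is isometrically embedded, so it suffices to exhibit two disjoint homometric $k$-sets inside any convenient subtree.

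First I would dispose of the large-diameter case. Suppose $\diam(T)\ge 2\lceil n^{1/3}\rceil-2$, and let $P=v_0v_1\cdots v_d$ be a longest path. Since a shortest path realizing the diameter is a geodesic and every subpath of a geodesic is a geodesic, $d_T(v_i,v_j)=|i-j|$ for all $i,j$. As $P$ has $d+1\ge 2\lceil n^{1/3}\rceil-1\ge 2k$ vertices, it contains two disjoint subpaths $A=\{v_0,\dots,v_{k-1}\}$ and $B=\{v_k,\dots,v_{2k-1}\}$, and $D(A)=D(B)=\{\,j-i:0\le i<j\le k-1\,\}$, so $A$ and $B$ are homometric and $h(T)\ge k\ge n^{1/3}-1$.

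It remains to handle $\diam(T)<2\lceil n^{1/3}\rceil-2$; then the radius is at most $\lceil n^{1/3}\rceil-1$, so I would fix a center $c$ and root $T$ at $c$, giving every vertex depth $<n^{1/3}$. I would then search for any one of three substructures, each of which yields two disjoint homometric $k$-sets by a routine argument: (i) a vertex $v$ with $\deg(v)\ge 2k$ — two disjoint $k$-subsets $A,B$ of $N(v)$ have $d_T(a,a')=2$ for all distinct $a,a'$ in $A$ (the unique $a$--$a'$ path runs through $v$) and likewise in $B$, so $D(A)=D(B)=\{2^{\binom k2}\}$; more flexibly, two disjoint ``claws'' $\{v\}\cup A$, $\{w\}\cup B$ with $A\subseteq N(v)$, $B\subseteq N(w)$, $|A|=|B|=k-1$ always have $D(\{v\}\cup A)=D(\{w\}\cup B)=\{1^{\,k-1},2^{\binom{k-1}2}\}$; (ii) two distinct vertices $v\ne w$ with the rooted subtrees $T_v\cong T_w$ and $|T_v|\ge k$ — such $v,w$ must be incomparable in the tree order (a proper subtree has strictly fewer vertices), hence $T_v,T_w$ are disjoint, and an isomorphism between them is distance-preserving, so $D(V(T_v))=D(V(T_w))$; (iii) a path on $2k$ vertices, handled by intervals as above. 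The plan for this case is to assume that none of (i)--(iii) occurs and derive that $n$ must be small. Under this assumption $\Delta(T)<2k$, every path has fewer than $2k$ vertices, and all rooted subtrees of size $\ge k$ are pairwise non-isomorphic; I would then iterate a ``pass to the heaviest branch'' reduction, each step replacing the current rooted subtree by the child-subtree carrying the most vertices — keeping at least a $1/\Delta(T)$ fraction of the vertices while consuming one unit of depth — and observe that, starting from $n$ vertices with a depth budget $<n^{1/3}$ and a branching budget $<2n^{1/3}$, only about three such steps can keep the vertex count above $1$. This three-fold multiplication of the path-length, branching, and repeated-structure ``budgets'' up to roughly $n$ is the source of the exponent $1/3$ and forces one of (i)--(iii).

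The hard part is exactly making that last step rigorous. In the intermediate regime where $\diam(T)$, $\Delta(T)$, and the quantity of repeated subtree structure are all of size $\Theta(n^{1/3})$ — the worst case being balanced, bushy trees of height about $n^{1/3}$ — the interval construction and the neighbourhood construction each deliver only about $\tfrac12 n^{1/3}$, so one genuinely has to combine a moderately long path with a moderately branching substructure while ensuring the two output sets stay disjoint, and the accounting that pins down the constant is delicate. If the direct accounting turned out to be unwieldy, the fallback would be to locate an isometric subtree of $T$ with many vertices but small effective diameter and invoke Theorem~\ref{diam-kg} there. Everything outside that counting core — the geodesic–interval argument, the all-distances-equal-$2$ neighbourhood argument, and the isometry-under-isomorphism argument — is routine once the right substructure has been found.
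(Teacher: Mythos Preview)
Your large-diameter case is fine and matches the paper's use of Construction~\ref{c-apy}. The small-diameter case, however, has a genuine gap: the counting you sketch does not work. The ``heaviest-branch'' descent only yields $n\le \Delta(T)^{\text{depth}}<(2n^{1/3})^{n^{1/3}}$, which is no contradiction; the claimed ``about three such steps'' has no basis, and the ``three-fold multiplication of path-length, branching, and repeated-structure budgets'' is a slogan, not an inequality. Your condition~(ii) (all large rooted subtrees pairwise non-isomorphic) contributes nothing quantitative, since the number of non-isomorphic rooted trees on $\ge k$ vertices is astronomically large. You flag this yourself and offer only a vague fallback to Theorem~\ref{diam-kg}, so the core of the argument is missing, not merely delicate.

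The paper's proof sidesteps all of this with a much simpler idea you overlooked: the \emph{sibling} construction (Construction~\ref{balanced-ptn}). Root $T$ at a center $v$, set $N_i=N_i(v)$, and look at the increments $|N_j|-|N_{j-1}|$. If $t=\lceil\diam(T)/2\rceil\le n^{1/3}-1$, then from $|N_i|\le |N_{i-1}|+(x-1)$ for all $i$ one gets $n\le x(t+1)^2/2$, forcing some increment $|N_j|-|N_{j-1}|\ge x-1\ge 2n^{1/3}-1$. At that level, discard from $N_j$ at most one child per parent in $N_{j-1}$ (the only child, or one vertex from an odd sibling-group); the remaining antichain $S$ splits evenly into $S_1,S_2$, and these are homometric because two siblings in an antichain are equidistant from every other antichain member. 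This gives $2h(T)\ge |N_j|-|N_{j-1}|\ge 2n^{1/3}-1$ in one line, with no recursion, no degree hypothesis, and no isomorphism bookkeeping.
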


\begin{thm}\label{special}
For a positive integer $n$,   $h(\mathcal{R}_n) \geq n/6$, $h(\mathcal{H}_n) \geq \sqrt{n}/2$.
\end{thm}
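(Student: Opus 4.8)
The plan is to handle caterpillars and haircombs separately, in each case producing two disjoint homometric sets by a direct combinatorial construction that exploits the ``spine plus pendant pieces'' structure of the tree.

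For the caterpillar bound $h(\mathcal{R}_n)\ge n/6$, let $T$ be a caterpillar with spine $P = v_1 v_2 \cdots v_s$ and some leaves attached. First I would observe that the spine itself is an induced path, so on the spine vertices alone we can already use the path construction (recall $h(P_s) = \lfloor s/2\rfloor$), giving homometric sets of size roughly $s/2$. The issue is when $s$ is small because most vertices are leaves. In that regime I would exploit that a leaf attached at $v_i$ is at distance $\mathrm{dist}(v_i,v_j)+1$ from a spine vertex $v_j$ and at distance $2$ (or $0$) from another leaf at the same $v_i$ — so a leaf behaves almost like ``a copy of $v_i$ shifted by one.'' Concretely, if some spine vertex $v_i$ carries many leaves $\ell_1,\dots,\ell_t$, then $\{\ell_1,\dots,\ell_a\}$ and $\{\ell_{a+1},\dots,\ell_{2a}\}$ are homometric (each has all pairwise distances equal to $2$), which already gives $h(T)\ge t/2$. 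Balancing the two cases: either the spine is long ($s\ge n/3$, giving $\ge n/6$ from the spine) or some vertex has $\ge (n-s)/s$ leaves; more carefully, I would partition the $n$ vertices into ``spine'' versus ``leaves'' and, among the leaves, group them by their attachment point, then take the largest group. A short averaging/pigeonhole argument over these cases, arranged so the worst case is balanced, should yield the constant $1/6$. The only delicate point is making the leaf-based and spine-based constructions disjoint and checking the distances genuinely match as multisets (not just pairwise-equal blocks) when we are forced to mix a few spine vertices with leaves; I would phrase the mixed construction as: pick an arithmetic-progression-like set of attachment points on the spine and at each chosen point take one leaf for $S_1$ and the point itself for $S_2$, so $S_2$ is a ``translate by $-1$'' of $S_1$ in the path metric — but here one must be careful that translating a homometric pair on a path stays inside the path, which is exactly the kind of boundary bookkeeping the authors usually hide behind ``we omit ceilings and floors.''

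For the haircomb bound $h(\mathcal{H}_n)\ge \sqrt n/2$, let $T$ have spine $v_1\cdots v_s$ with disjoint legs $L_1,\dots,L_m$ of lengths $a_1,\dots,a_m$ hanging off it, so $n = s + \sum a_i$ (roughly). There are two extreme structures. If some single leg $L_i$ is long, say of length $\ge \sqrt n$, then $L_i$ together with its attachment vertex is a path of length $\ge\sqrt n$ and the path construction gives $h(T)\ge \sqrt n/2$. Otherwise all legs are short, $a_i < \sqrt n$, which forces $m > \sqrt n/2$ or $s > \sqrt n/2$ (since $s+\sum a_i = n$ and each $a_i\le\sqrt n$ gives $\sum a_i \le m\sqrt n$, so either $s$ is large or $m$ is large). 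If $s$ is large the spine alone suffices. If $m$ is large, I would pick one pendant vertex (the endpoint farthest along each leg, or just the foot of each leg) from $\lceil\sqrt n/2\rceil$ distinct legs whose attachment points are chosen to form a homometric-friendly set on the spine: take legs attached at spine positions $p_1 < p_2 < \cdots$ chosen so that $\{p_i\}$ and a shifted copy $\{p_i'\}$ are homometric in $\mathbb Z$ (two translates, or any known homometric pair on the integers). The distance between two pendant feet at attachment points $p,q$ is $|p-q|$ plus the two leg-lengths; to keep this clean I would restrict to legs \emph{of the same length} — by pigeonhole, if $m > \sqrt n/2$ legs have lengths in $[1,\sqrt n)$, some $\ge m/\sqrt n$... no — rather, I would just take the attachment \emph{points} themselves on the spine as one homometric set and the feet of legs at those points as the other, noting that if all those legs have length exactly $1$ the foot-to-foot distance equals attachment-to-attachment distance, reducing again to the path case; handling unequal leg lengths is the genuine obstacle, and I expect the authors resolve it by a further pigeonhole extracting $\sqrt n/2$ legs of a common length, or by observing that in a haircomb one may always choose the construction to live on the spine once legs are short.

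The main obstacle, in both parts, is the same: converting the clean ``translate a homometric set on $\mathbb Z$'' idea into an honest construction inside the tree when the pieces being glued (leaves, legs) have varying sizes, and ensuring the two sets are disjoint. I expect the proof to reduce everything to (i) the path fact $h(P) = \lfloor|V(P)|/2\rfloor$, (ii) the trivial observation that $t$ equidistant points split into two homometric halves of size $t/2$, and (iii) a two- or three-way case split with a pigeonhole step, optimized so the worst case gives the stated constant $1/6$ (resp. the $\sqrt n/2$).
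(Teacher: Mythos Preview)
Your case split on the spine length is the right opening move, and the long-spine case is fine. The short-spine case for caterpillars, however, has a genuine gap: using only the \emph{single largest} leaf group cannot yield a linear bound. Take a caterpillar whose spine has $s=n/4$ vertices, each carrying exactly three leaves. Then $s<n/3$, yet every leaf group has size $3$, so your construction gives only $h(T)\ge 1$; the pigeonhole $(n-s)/s$ is a constant here, not $\Theta(n)$. Your fallback ``mixed'' construction (a leaf at each chosen attachment point for $S_1$, the attachment point itself for $S_2$) does not work either: two leaves at distinct attachment points $p,q$ are at distance $|p-q|+2$, while the spine vertices $v_p,v_q$ are at distance $|p-q|$, so $D(S_1)$ is $D(S_2)$ shifted by $+2$ and the multisets never coincide. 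A leaf is not a ``translate by $-1$'' of its attachment point in the tree metric --- it is one step farther from \emph{every} other spine vertex, not just from one side.

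The idea you are missing is that leaf groups at \emph{different} attachment points can be combined into a single homometric pair. The paper does this via the sibling construction (Construction~\ref{balanced-ptn}, packaged in Lemma~\ref{lem-main}): root at a spine endpoint, so leaves sharing an attachment point are siblings; take $S$ to be all leaves whose attachment point has degree $\ge 4$ (so every vertex of $S$ has a sibling in $S$), and split each sibling family in half. The two halves are homometric because the distance between two leaves depends only on their attachment points. The loss from odd-sized families is at most the number $k\le |V(P)|$ of such attachment points, giving $2h(T)\ge |S|-k \ge n-2|V(P)|\ge n/3$ when $|V(P)|<n/3$.

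For haircombs you are working too hard in the ``many legs'' case. Each leg occupies a distinct spine vertex, so the number of legs is at most the spine length; hence ``many legs'' already forces a long spine and there is nothing further to do. The paper's argument is a single pigeonhole: with $m$ spine vertices and $k\le m$ legs of lengths $l_1,\dots,l_k$ one has $(k+1)\max(m,l_1,\dots,l_k)\ge m+\sum_i l_i=n$, so either $m\ge\sqrt{n}$ or some $l_i\ge\sqrt{n}$; either way $\diam(T)\ge\sqrt{n}$ and Construction~\ref{c-apy} gives $2h(T)\ge\sqrt{n}$. No equal-length pigeonhole or leg-matching is needed.
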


\begin{thm}\label{spiders}
For positive integers $n$,  $k$,   such that $k<n$, 
\begin{equation}\nonumber
 h(\mathcal{S}_{n,k}) \ge 
\begin{cases}
\frac{5}{12}n,& k=3,\\
\frac {1}{3}n, &  k=4,\\
\left(\frac{1}{4} + \frac{3}{8k-12}\right)n, & k\ge 5. 
\end{cases}
\end{equation}
Moreover, $h(\mathcal{S}_{n,n/2}) = (n+2)/4$, and   $h(T) = n/2$ for any $T\in \mathcal{S}_{n,3}$ with 
legs on $l_1, l_2, l_3$ vertices if $l_1+1=t(l_2-l_3)$  for an odd integer $t\ge 1$ or if 
$l_1 = l_2 = l_3$. 
\end{thm}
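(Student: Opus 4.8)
The proof of Theorem~\ref{spiders} is constructive, the one exception being the upper bound in the case $k=n/2$, which needs a counting argument. It helps to encode the metric of a spider $T$ explicitly: writing $r$ for the head and $L_1,\dots,L_k$ for the legs, with $l_1\ge l_2\ge\cdots\ge l_k$, identify a vertex with the pair (leg, depth), where the depth is the distance from $r$. Then two vertices on the same leg at depths $a,b$ are at distance $|a-b|$, two vertices on different legs at depths $a,b$ are at distance $a+b$, and $r$ is at distance $a$ from a depth-$a$ vertex. A disjoint pair $(S_1,S_2)$ is thus prescribed by its depth sets $A_i,B_i\subseteq\{1,\dots,l_i\}$ on each leg (plus whether $r$ is used), with $A_i\cap B_i=\varnothing$, and $D(S_1)=D(S_2)$ is the conjunction of three conditions: the within-leg differences agree over all legs (as multisets), the cross-leg sums $\{a+b: a\in A_i,\ b\in A_j,\ i<j\}$ agree with the corresponding $B$-sums, and the depth contributions $\{a: a\in A_i,\ i\}$ versus $\{b: b\in B_i,\ i\}$ agree in the event that $r$ is placed.

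I would assemble the pairs from three gadgets, to be prepared in Section~\ref{Constructions}. First, any two legs together with $r$ span an isometric path on $l_i+l_j+1$ vertices, so its two halves form a disjoint homometric pair; applied to the two longest legs this already yields $h(T)\ge\lfloor(l_1+l_2+1)/2\rfloor$ and settles every spider in which the remaining legs carry few vertices (for $k=3$ this is at least $5n/12$ once $l_3\le n/6$, and similarly for $k=4$ and $k\ge 5$ past the corresponding thresholds). Second, an order-$s$ prefix of one leg and an order-$s$ prefix of another are homometric and disjoint for every $s\le\min(l_i,l_j)$, and, more generally, by the leg symmetry of $T$ a configuration placed on a sub-collection of legs and its image under an equal-length pairing of those legs form a disjoint homometric pair---this handles balanced spiders whose leg lengths pair up. Third, a ``folding'' gadget that matches a configuration on one part of $T$ with a \emph{shifted} copy on another part, the shift being generated by routing distances through $r$. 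The three ranges $k=3$, $k=4$, $k\ge5$ are then the output of combining these: on the complementary ``balanced'' range of leg lengths one loads most of $S_1$ onto the longest legs and distributes the remainder over the short legs in mirrored pairs, and the coefficients $5/12$, $1/3$ and $\tfrac14+\tfrac3{8k-12}$ fall out of optimizing how many leftover legs can be paired against how many vertices must be discarded.

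For the sharp statements I would argue case by case. Since $h(T)\le\lfloor n/2\rfloor$ for every graph, in each ``moreover'' case it suffices to construct a homometric pair of that order (and, for $k=n/2$, also a matching upper bound). When $l_1=l_2=l_3$ the spider carries its full leg-symmetry group; here I would exhibit one explicit pair of disjoint sets covering all but at most one vertex of $T$ that correspond to each other under a fixed-point-free involution acting simultaneously on the legs and on the depths, and verify the three multiset identities directly. The case $l_1+1=t(l_2-l_3)$ with $t$ odd is the folding gadget at its limit: the path $P$ through the two shorter legs has $l_2+l_3+1$ vertices and leg~$1$ is attached to $P$ at $r$, which sits off-centre by $\tfrac12(l_2-l_3)$; one takes suitable halves of $P$ and splits leg~$1$ between $S_1$ and $S_2$ so that a depth-$a$ vertex of leg~$1$ is matched with a vertex of $P$ whose position differs by a multiple of $l_2-l_3$, and the relation $l_1+1=t(l_2-l_3)$ is exactly what makes this pattern of shifts close up after $t$ steps, with $t$ odd keeping the orientations consistent. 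Finally, for $h(\mathcal S_{n,n/2})$ one combines a mirrored-pairs construction giving $h\ge(n+2)/4$ with a reverse inequality proved by a short count: the extremal such spider has small diameter, so distance multisets are supported on a bounded set, and bounding the number of depth-$2$ vertices a homometric pair of size $h$ can use against the number of distance-$4$ pairs it forces yields $h\le(n+2)/4$.

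The step I expect to be the genuine obstacle is controlling the cross-leg sums $\{a+b: a\in A_i,\ b\in A_j\}$: this is the truly non-path part of a spider's metric, and, in contrast to the within-leg differences, it is not translation invariant, so any naive ``bottom half of $S_1$ against top half of $S_2$'' matching produces cross-sums that differ by a common shift. Forcing these sums to coincide between $S_1$ and $S_2$ while keeping $|S_1|=|S_2|$ as large as claimed is what drives the mirrored-pair bookkeeping, and it is precisely where the constants $5/12$, $1/3$, $\tfrac14+\tfrac3{8k-12}$ and the parity restriction on $t$ are generated.
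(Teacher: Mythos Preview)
Your framework is the paper's: combine a small zoo of explicit homometric pairs and optimise over the leg-length profile. The diameter bound (halving the path through the two longest legs) and the prefix-pairing of legs are exactly Constructions~\ref{c-apy} and~\ref{k-legs}, and for $k\ge4$ the paper's argument is indeed just those two, balanced as you describe.

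There is a genuine gap at $k=3$. Your two named gadgets do not reach $5n/12$: when $l_1=l_2=l_3\approx n/3$ the diameter bound gives $h\ge n/3$, prefix-pairing two legs gives $h\ge n/3$, and there is no third leg to ``mirror'' against. Your third gadget (``folding'') is where the missing idea should live, but as stated it is not a construction. What the paper actually uses here is Construction~\ref{three-legs}: on one leg take \emph{every other} vertex in $S_1$ and the complementary alternating set in $S_2$, then load $S_1$ with $v$ and a prefix of $L_1$, and $S_2$ with all of $L_2$. This gives $2h(T)\ge n-\min(l_1-l_2,\,l_2-l_3)$, and balancing it against the diameter bound is what produces $5/12$. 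The alternation is the non-obvious step; nothing in your sketch suggests it.

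Your upper bound for $h(\mathcal S_{n,n/2})$ is headed in the wrong direction. The extremal spider the paper uses has one leg on $n/2-1$ vertices and $n/2$ single-vertex legs; its diameter is about $n/2$, not bounded, so ``distance multisets are supported on a bounded set'' is false and the distance-$4$ count you propose does not control $h$. The paper instead observes that this spider is an $(H,m,v)$-flower (Lemma~\ref{flower}): any homometric pair is essentially trapped either in the long path or in the star part, and each option caps $h$ at $(n+2)/4$.

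Finally, the paper's written proof does not spell out the two $h(T)=n/2$ clauses; it records the relevant pair in Construction~\ref{three-legs-special} (block-alternation on $L_3$ with block length $x=l_1-l_2$), and your description of the $l_1+1=t(l_2-l_3)$ case is a correct reading of why that pattern closes up.
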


 The following two theorems are corollaries of previously known number-theoretic and graph-theoretic results.

\begin{thm}\label{dense-case}
For every fixed $\alpha > 0$ and for every $\epsilon > 0$ there exists $N=N(\alpha, \epsilon)$ 
so that for all $n > N$, if $G$ is a graph on $n$ vertices, diameter $2$ and at most $n^{2-\alpha}$ edges or 
at least $\binom{n}{2} -  n^{2-\alpha}$ edges, then
$h(G) \geq n/2-\epsilon n$.  
\end{thm}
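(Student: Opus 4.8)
\medskip

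\noindent\emph{Proof proposal.}
The plan is to recast the problem in terms of edge counts and then balance those counts by local operations together with a number-theoretic adjustment. Since $\diam(G)=2$, any two distinct vertices of $G$ are at distance $1$ or $2$, so for a vertex set $S$ the multiset $D(S)$ consists of $e_G(S)$ copies of $1$ and $\binom{|S|}{2}-e_G(S)$ copies of $2$, where $e_G(S)$ is the number of edges of $G$ inside $S$; hence two sets of the same size are homometric in $G$ precisely when they span equally many edges. Because $e_G(S)=\binom{|S|}{2}-e_{\bar G}(S)$, this is in turn the same as spanning equally many edges of $\bar G$. Setting $H:=G$ when $e(G)\le n^{2-\alpha}$ and $H:=\bar G$ when $e(G)\ge\binom{n}{2}-n^{2-\alpha}$, we have $e(H)\le n^{2-\alpha}$ in both cases, and it suffices to produce disjoint $A,B\subseteq V(H)$ with $|A|=|B|=h:=n/2-\epsilon n$ and $e_H(A)=e_H(B)$.

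To do this I would first delete a small set $Z$ of at most $\epsilon n$ vertices, including all those of $H$ of very large degree, so that $H':=H-Z$ has maximum degree $O(n^{1-\alpha}/\epsilon)$ and at most $n^{2-\alpha}$ edges; if $H'$ has $O(\epsilon n)$ edges we are already done by taking for $A,B$ two disjoint sets of isolated vertices of $H'$, so assume not. Start from a bipartition $V(H')=A_0\sqcup B_0$ with $|A_0|=|B_0|$ chosen partly at random, and reduce the imbalance $\delta:=e_{H'}(A_0)-e_{H'}(B_0)$ using two elementary operations: moving a vertex $v$ from one part to the other changes $e_{H'}(A)-e_{H'}(B)$ by exactly $\pm\deg_{H'}(v)$ --- the within-part and cross contributions telescope --- and deleting a vertex from a part changes it by $\pm$ that vertex's current degree inside its part; pairing moves, or pairing deletions, keeps the two parts of equal size. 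A second-moment estimate keeps the initial $\delta$ small; coarse moves along the largest available degrees then bring $|e_{H'}(A)-e_{H'}(B)|$ down to a small residual $g$, while $\Omega(n)$ vertices of small degree and the remaining discard budget stay untouched; a final batch of paired deletions trims the common size down to $h$ and, provided $g$ can be cleared exactly, yields $e_H(A)=e_{H'}(A)=e_{H'}(B)=e_H(B)$.

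The only point at which a genuine theorem is invoked is that last proviso: one must show the residual $g$ is \emph{representable} as a signed combination of a bounded number of the available step sizes, that is, rule out a global divisibility obstruction. This is exactly what a standard completeness / zero-sum result for sequences of bounded positive integers (of Erd\H os--Ginzburg--Ziv or Folkman type) provides, its hypotheses being supplied by the abundance of low-degree vertices of $H'$ and by the slack of $\epsilon n$ discardable vertices. I expect this exact-representability step --- together with the bookkeeping that caps the number of operations by the discard budget and maintains $|A|=|B|$ throughout --- to be the one real difficulty; the reduction to edge counts and the two shift identities are routine. (As an alternative to the last step, a local central limit argument shows that a suitably randomized sequence of corrections hits $e_{H'}(A)-e_{H'}(B)=0$ with positive probability.)
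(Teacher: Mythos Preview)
Your opening reduction is exactly the paper's: in a diameter-$2$ graph two equal-sized sets are homometric iff they span the same number of edges, and passing to the complement (your $H$) handles the dense case since $e_G(S)=\binom{|S|}{2}-e_{\bar G}(S)$. At that point, however, the paper stops arguing and simply invokes the theorem of Caro and Yuster (stated earlier in the paper): any $n$-vertex graph with at most $n^{2-\alpha}$ edges contains two disjoint induced subgraphs of the same order and size on at least $n/2-\epsilon n$ vertices each. That one citation is the entire proof.

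What you sketch after the reduction --- delete high-degree vertices, take a random balanced bipartition, correct the imbalance by vertex moves and paired deletions, and close the last residual via a zero-sum/EGZ-type representability argument --- is therefore not an alternative route to the present theorem but rather a proof outline of the Caro--Yuster result itself (and is, in broad strokes, how that kind of result is proved). So your proposal is correct in spirit but much longer than needed: the paper treats Caro--Yuster as a black box, and you are re-opening it. If you keep your version, note that the step you yourself flag --- exact representability of the residual $g$ by a bounded signed combination of available degrees --- is where the real content lies, and your sketch does not actually supply it; citing Caro--Yuster, as the paper does, is both shorter and rigorous. One small plus of your write-up is that you make the complement trick for the dense case explicit, whereas the paper leaves it implicit.
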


\begin{thm} \label{general-diam2}
Let $G$ be  a graph with diameter $2$ and  even number of vertices $n\geq 90$. Let  $V_i = \{v\in V: \deg(v)=i\}$. 
If $G$ satisfies one of the following conditions:\\
1) $|\{i: |V_i|\, \text{ is odd}\}|>n/2$,\\
2) $|V_i|$ is even for each $i$, \\
then $h(G) = n/2$.
\end{thm}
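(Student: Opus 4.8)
The plan is to use the diameter-$2$ hypothesis to reduce the statement to a balanced partition problem for the degree sequence, and then to treat the two parity conditions in turn (the bound $h(G)\le n/2$ is trivial since disjoint sets of size $h$ need $2h\le n$, so only $h(G)\ge n/2$ is at issue). First, since any two distinct vertices of $G$ are at distance $1$ or $2$, a set $S$ with $|S|=s$ has distance multiset consisting of $e(S)$ copies of $1$ and $\binom{s}{2}-e(S)$ copies of $2$, where $e(S)$ is the number of edges inside $S$; hence two equal-size sets are homometric exactly when they span equally many edges. For a partition $V(G)=A\cup B$ with $|A|=|B|=n/2$, summing degrees over $A$ gives $\sum_{v\in A}\deg(v)=2e(A)+e(A,B)$ and likewise over $B$, so $e(A)=e(B)$ if and only if $\sum_{v\in A}\deg(v)=\sum_{v\in B}\deg(v)$, i.e.\ each part has degree sum $e(G)$. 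Thus it suffices to pick, for each degree $i$, a value $a_i$ with $0\le a_i\le|V_i|$ such that $\sum_i a_i=n/2$ and $\sum_i i\,a_i=\tfrac12\sum_i i\,|V_i|$, where $a_i$ counts the degree-$i$ vertices placed in $A$.

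Condition 2 is then immediate: if every $|V_i|$ is even, take $a_i=|V_i|/2$ and both equalities hold on the nose. For condition 1, write $|V_i|=2b_i+\varepsilon_i$ with $\varepsilon_i\in\{0,1\}$ and begin with $a_i=b_i$; this keeps $A$ and $B$ equal in both size and degree sum, leaving one ``leftover'' vertex from each odd class still to be placed. Let $i_1<\dots<i_k$ be the degrees of these leftovers, so $k=\lvert\{i:|V_i|\ \text{odd}\}\rvert>n/2$. Since $n$ is even, $k$ is even; and since each even class contributes an even amount to the even number $2e(G)=\sum_i i\,|V_i|$ while the odd classes contribute $\sum_j i_j$ mod $2$, the quantity $\sum_j i_j$ is even. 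As the two sides are balanced so far and $k$ is even, we are left with the task of assigning exactly $k/2$ of the leftovers to $A$ so that their degrees sum to $\tfrac12\sum_j i_j$. Finally, $k>n/2$ forces $i_j\le n-1\le 2k-3$ for all $j$, so $\{i_1,\dots,i_k\}$ is a set of $k$ distinct positive integers sitting inside an interval of length less than $2k$; in particular it cannot lie in a single residue class modulo $2$.

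What remains is to prove, or to quote from the literature, the following number-theoretic fact, which I expect to be where the work is: \emph{a set of $k$ distinct positive integers contained in an interval of length $<2k$, with $k$ even, even sum, and $k$ bounded below by a small absolute constant, splits into two $k/2$-element subsets of equal sum.} To prove this I would start from any size-balanced split, observe that its discrepancy $\sum_A-\sum_B$ is even (the two sides have the same parity because the total is even), and then push the discrepancy to $0$ by repeatedly swapping an element of the sum-heavier side with an element of the sum-lighter side whose value is exactly one smaller -- the density forced by the short interval should make such a swap available as long as the discrepancy is nonzero. The hard part will be the endgame: ruling out getting stuck at a small positive even discrepancy. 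This is where one must combine the even-sum and short-interval hypotheses with the numerical room given by $n\ge90$, and, should the leftover degrees alone prove too rigid, exploit the additional freedom in the $a_i$ available from any class $V_i$ with $|V_i|\ge2$. The remaining ingredients -- the reduction in the first paragraph and the size/parity bookkeeping in the second -- are routine.
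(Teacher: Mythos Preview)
Your reduction is exactly the paper's: diameter $2$ turns the question into an edge-count match, which via the identity $\sum_{v\in A}\deg(v)=2e(A)+e(A,B)$ becomes a balanced degree-sum partition; condition~2 is handled by halving each $V_i$; for condition~1 one halves each $V_i$ as far as possible and is left with one vertex from each odd class, whose degrees form a set of $k$ distinct integers in $\{1,\dots,n-1\}\subseteq\{1,\dots,2k-2\}$ with $k$ even and even sum, to be split into two $k/2$-subsets of equal sum.

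The only divergence is at the last step. The paper does not attempt your swapping argument; it simply quotes a theorem of K\'arolyi stating that any set of $m\ge 89$ integers in $[1,2m-2]$ admits a partition into two parts of (almost) equal sizes and (almost) equal sums, and then observes that the parity constraints upgrade ``almost equal'' to ``equal''. So your instinct to ``quote from the literature'' is precisely what the paper does, and the extra flexibility you mention (borrowing from classes with $|V_i|\ge 2$) is never invoked. Your hand-rolled swap argument is in the right spirit but, as you yourself flag, the endgame is where such arguments bite; the paper sidesteps this entirely by citing K\'arolyi.
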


 \section{ Constructions and definitions}\label{Constructions}

In the remaining part of the paper, 
we may omit the floor and ceiling of fractions for simplicity. For a tree $T$,  and its vertex $r$, let   $P=P(T, r)$ be a partial order on the vertex set 
of a tree $T$, such that $x<y$ in $P$ if the $x$,$r$-path in $T$ contains $y$. 
We call this an $r$-{\it order} of $T$ and say that $T$ is $r$-ordered. 
A vertex $y$  is a {\it parent} of  $x$ in $P(T, r)$,   and  a vertex $x$ is a {\it child} of $y$  if $x < y$ and there is no other element $z$ 
such that $x < z < y$. If neither $x< y$ nor $y< x$ for two elements $x$ and $y$ in $P(T,r)$, 
then we say that $x$ and $y$ are {\it noncomparable}.  An {\it antichain} is defined as a set of pairwise noncomparable elements. 
A pair of vertices $x$ and $y$ are called {\it siblings} if they have the same parent.  \\

For a vertex $x$  of degree at least $3$ in a tree $T$, the connected components of  $T-x$   that are paths are called {\it pendent } paths of $x$. 
The endpoints of these paths adjacent to $x$ in $T$ are called {\it attachment} vertices.
A vertex $x$ of degree at least $3$ is called {\it bad} if it has an odd number of pendent paths.
Moreover, let a shortest pendent path corresponding to a bad vertex $x$ be called a {\it bad path}.\\

We call a tree $T'$ a {\it cleaned} $T$ if $T'$ is obtained from $T$ 
by removing the  vertices of all bad paths. 
A tree $T''$ is called {\it trimmed} $T$ is it is obtained from $T$ by removing the
vertices of  all bad paths and  removing  all the vertices except for the attachment vertices of all remaining pendent paths of $T$.\\

Let $bad(T)$, $bad_3(T)$ be the number of bad vertices, and the number of bad vertices of degree $3$ in $T$, respectively. 
Let $bad_l(T)$ be the total number of vertices in bad paths of $T$.
Let $N_i(x)$ be the set of vertices at distance $i$ from a vertex $x$. 
For two disjoint sets $A',A''\subseteq V(G)$, 
let $D(A',A'')$ be the multiset 
of distances in $G$ between pairs $u'$ and $u''$, $u'\in A'$, $u''\in A''$.  \\

  \begin{construction} \cite{APY}\label{c-apy}
  Let $G$ be a graph and $G'=(v_1, \ldots, v_{2t})$ be a shortest $v_1$,$v_{2t}$-path in $G$, for some $t\geq 1$.
  Let $S_1= \{v_1, \ldots,  v_ t\}$, $ S_2  = \{  v_{ t +1}, \ldots, v_{2t}\}$.
  Since both $S_1$ and $S_2$ induce shortest paths of the same length, they form homometric sets. 
  \end{construction}
  
  \begin{construction}\label{balanced-ptn}
  Let $T$ be a tree and $S$ be an antichain in $P(T,r)$  such that each vertex in $S$, 
has a  sibling  in $S$.  Let   $S'_1, \ldots, S'_k$ be the maximal families of siblings in $S$. 
Let $S'_i = A_i\cup B_i\cup C_i$, where $A_i, B_i, C_i$ are disjoint, $|A_i|=|B_i|$, $|C_i| \leq 1$, $i=1, \ldots, k$.
Let $S_1= A_1\cup A_2 \cup \cdots \cup A_k$, $S_2= B_1\cup B_2\cup \cdots \cup B_k$. 
The sets $S_1$ and $S_2$ are homometric. See Figure~\ref{hom-set}.  
\end{construction}

\begin{figure}[h]
\begin{center}
\vspace{-0.4cm}
\includegraphics[scale=0.4]{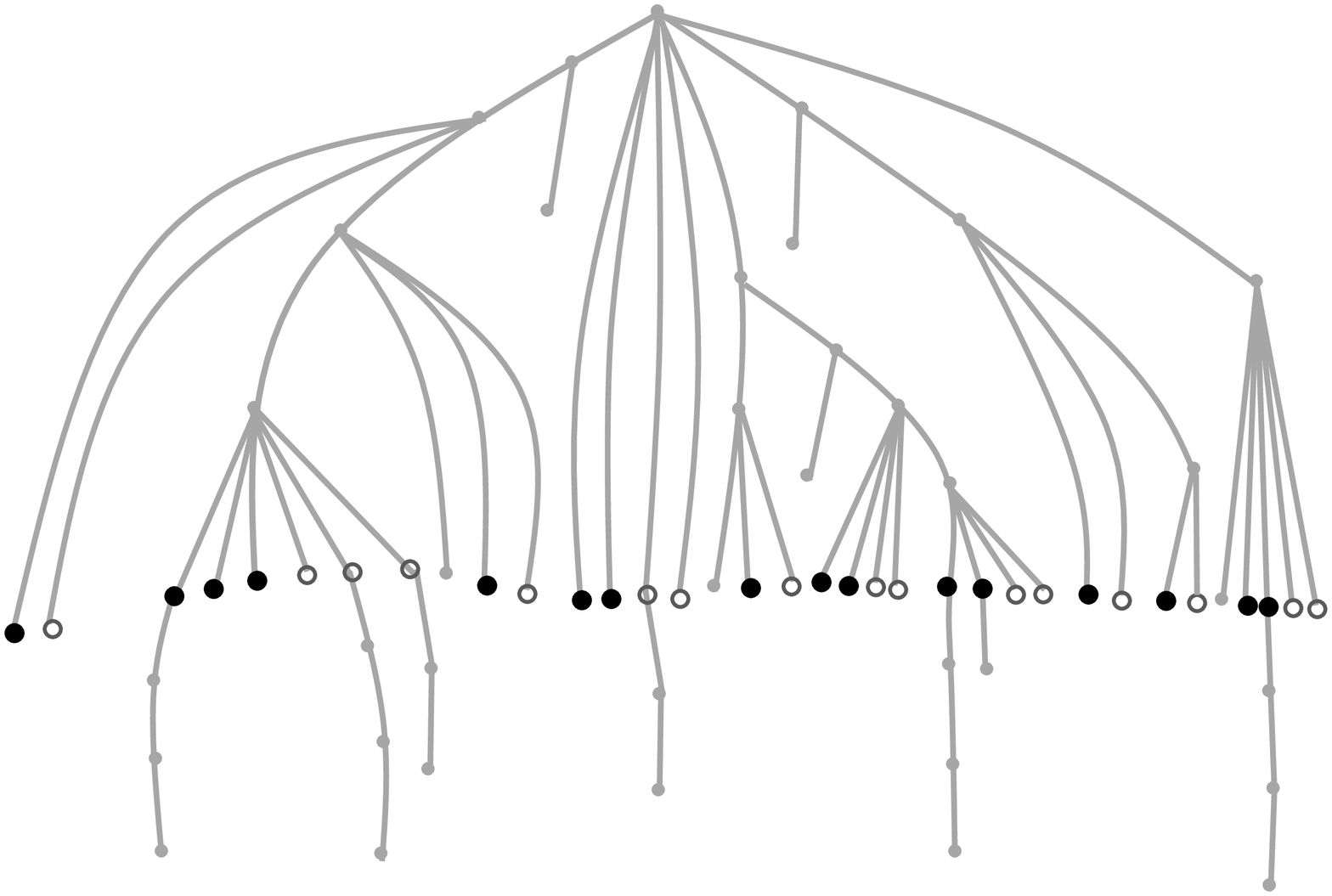} 
\vspace{-.8cm}
\end{center}
\caption{{\small Example of Construction~\ref{balanced-ptn} 
with $S_1$ and $S_2$ consisting of black and hollow vertices, respectively. }}
\label{hom-set}
\end{figure}

\begin{construction}\label{three-legs} 
Let $T\in \mathcal{S}_{n,3}$ with head $v$ and legs $L_1, L_2, L_3$ 
on $l_1, l_2, l_3$ vertices, 
respectively, $l_1\ge l_2.$ 
Let $L_3$ be a path $v_1,\dots,v_{l_3}$, 
where $v_{l_3}$ is a leaf of $T$. 
Let $S_1$ be the set consisting of $v$, the vertices 
$v_{2i}$, $1\le i\le \lfloor l_3/2\rfloor$ and 
$l_2-1$ vertices in $L_1$ closest to $v$. 
Let $S_2$ be the set consisting of the vertices 
$v_{2i-1}$, $1\le i\le \lfloor l_3/2\rfloor$  
and $V(L_2)$. 
The sets $S_1$ and $S_2$ are homometric 
(see Figure~\ref{spider}(b)).
\end{construction}

\begin{construction}\label{three-legs-special}  
Let $T\in \mathcal{S}_{n,3}$ with head $v=v_0$ and legs $L_1, L_2, L_3$ on 
$\ell_1, \ell_2, \ell_3$ vertices, respectively, $l_1>l_2.$  
Let $L_3$ be a path $v_1,\dots,v_{l_3}$, 
where $v_{l_3}$ is a leaf of $T$. Let $l_1 - l_2 = x>0$, $l_3+1 = bx + r$, where $b$ and $r$ are integers, $0 \leq r  < x$.  
If $b$ is odd, let $a=b-1$ and if $b$ is even, let $a=b$. 
Let  $P_i = \{v_{ix},\dots,v_{ix+x-1}\}$ for $0 \le i \le a$. 
Let $S_1$ be the union of  $P_{2i}$, $0\le i\le a/2$ and $V(L_2)$. 
Let $S_2$ be the union of $P_{2i-1}$, $1\le i\le a/2$ and $V(L_1)$. 
The sets $S_1$ and $S_2$ are homometric 
(see Figure~\ref{spider}(c)).
\end{construction}

Construction~\ref{three-legs-special} is not used in any of the proofs here. 
However, it hints that for a tree $T\in \mathcal{S}_{n,3}$, 
$h(T)$ can be very close to $n/2$,  
depending on the optimized value of $x$. 

\begin{construction} \label{k-legs} 
Let $T$ be a spider with head $v$ and $k$ legs 
$L_1,L_2,\dots,L_k$, $k\ge 3$, 
on $\ell_1 \geq \ell_2 \geq \cdots \geq \ell_k$ vertices, respectively. 
Let  $A_i =V(L_{2i})$, let 
$B_i$ be the set of $|A_i|$ vertices closest to $v$ in $L_{2i-1}$ , $i= 1, \ldots, \lfloor k/2 \rfloor=m$.
Then let $S_1 = A_1 \cup A_2\cup \cdots \cup A_m$, $S_2 = B_1 \cup B_2\cup \cdots \cup B_m$.
Clearly $S_1$ and $S_2$ are homometric sets (see Figure~\ref{spider}(d)).
\end{construction}

\begin{figure}[h]
\vspace{-.5cm}
\begin{center}
\centering
\includegraphics[scale=0.4]{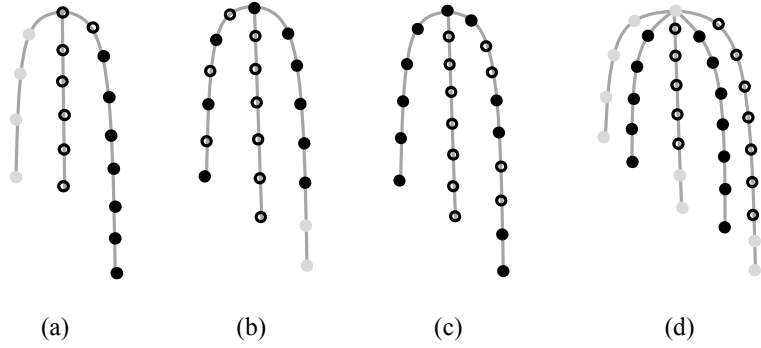}
\end{center}
\vspace{-3.cm}
\caption{{\small Examples of homometric sets in spiders using  Constructions~\ref{c-apy}, \ref{three-legs}, \ref{three-legs-special} (with $x=2$, $a=4$) and \ref{k-legs}, respectively.  
Two homometric sets consist of  black and hollow vertices, respectively. }}
\label{spider} 
\end{figure}

We shall say that two integers are {\it almost equal} if they differ by $1, 0$, or $-1$.~
\begin{thm}[Karolyi~\cite{karolyi}]\label{kar}
Let $X$ be a set of $m$ integers, each between $1$ and $2m-2$.  If $m\geq 89$, then one can partition $X$ into two sets, $X_1$ and $X_2$  of almost equal sizes such that 
the sum of elements in $X_1$ is almost equal to the sum of elements in $X_2$.
\end{thm}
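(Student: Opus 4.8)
The plan is to reformulate the statement as a subset-sum question and settle it by a local-exchange (``hill-walking'') argument that exploits the fact that $m$ integers lying in $\{1,\dots,2m-2\}$ are extremely dense there, only $m-2$ values being absent. Put $\Sigma=\sum_{x\in X}x$ and $k=\lceil m/2\rceil$, and for $Y\subseteq X$ write $\sigma(Y)=\sum_{x\in Y}x$. If $X=X_1\cup X_2$ with $X_1\cap X_2=\emptyset$, then, since $|X_1|+|X_2|=m$ and $\sigma(X_1)+\sigma(X_2)=\Sigma$, the two sizes and the two sums are both almost equal precisely when $|X_1|=k$ and $\sigma(X_1)\in\{\lfloor\Sigma/2\rfloor,\lceil\Sigma/2\rceil\}$. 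So it suffices to show that the set $\mathcal{A}=\{\sigma(Y):Y\subseteq X,\ |Y|=k\}$ of attainable $k$-subset sums meets $\{\lfloor\Sigma/2\rfloor,\lceil\Sigma/2\rceil\}$, and then to take $X_1=Y$, $X_2=X\setminus Y$. Note that $\Sigma/2$ is central in the range of $\mathcal{A}$: the least element of $\mathcal{A}$ is the sum of the $k$ smallest elements of $X$, which is at most $\frac{k}{m}\Sigma$, and the greatest is the sum of the $k$ largest, which is at least $\frac{k}{m}\Sigma\ge\frac12\Sigma$ since $k\ge m/2$; a short separate estimate (the $k$ smallest cannot all be above average) places $\lceil\Sigma/2\rceil$ between these, so the only way to fail is for $\mathcal{A}$ to have a gap straddling $\Sigma/2$.

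The structure of $\mathcal{A}$ is governed by the \emph{block structure} of $X$, that is, its decomposition into maximal runs of consecutive integers. Since the $m-1$ consecutive gaps of $X$ are positive integers summing to $x_m-x_1\le 2m-3$, at most $m-2$ of them exceed $1$, so there are few blocks, and within a single block the sums of its $j$-element subsets fill a whole interval of consecutive integers. The elementary move is to swap an element $a\in Y$ for an element $b\in X\setminus Y$ adjacent to $a$ within $X$; this preserves $|Y|=k$ and changes $\sigma(Y)$ by exactly the corresponding gap of $X$, which is $\pm1$ whenever $a$ and $b$ lie in one block.

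With this in hand I would proceed as follows. Pick a $k$-subset $Y_0$ with $\sigma(Y_0)$ within one block-length of $\Sigma/2$ — such a subset exists by a direct greedy construction along the blocks — and then move toward the target one unit at a time using $\pm1$-swaps inside a block. The walk can stall only at a $Y$ no element of which is block-adjacent to its complement, i.e.\ a $Y$ that is a union of whole blocks of $X$, which I call \emph{frozen}; the point is that every frozen $k$-subset has sum far from $\Sigma/2$. Indeed a union of whole blocks is a coarse object whose sum moves in large jumps, so a frozen $k$-subset with sum near $\Sigma/2$ would force $X$ to contain a long central gap — and with only $m-2$ missing values no gap can be long enough to isolate $\Sigma/2$ from $\mathcal{A}$ once $m$ is large enough. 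Making this inequality precise in the worst configurations $X=\{1,\dots,t\}\cup\{2m-1-t',\dots,2m-2\}$ with $t+t'=m$ is where the threshold $m\ge89$ emerges.

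The main obstacle is exactly this quantitative step. One cannot afford to be off even by $1$: landing on a $k$-set of sum $\lfloor\Sigma/2\rfloor-1$ leaves complementary sums differing by more than $1$, so the exchanges must genuinely reach $\lfloor\Sigma/2\rfloor$ or $\lceil\Sigma/2\rceil$ rather than merely approximate it. This forces a case split: the generic case, in which $X$ has no long gap and essentially every integer in a wide band around $\Sigma/2$ already lies in $\mathcal{A}$, so the walk succeeds at once; and the case of one or two long gaps, handled directly by choosing how many elements to draw from each block of $X$ and checking that the resulting sums cover $\lfloor\Sigma/2\rfloor$ or $\lceil\Sigma/2\rceil$. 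I expect the second case — the optimization over block-partitions together with the small additive error terms — to be the real work, and the source of the explicit constant $89$.
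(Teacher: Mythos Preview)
The paper does not contain a proof of this statement: Theorem~\ref{kar} is quoted from K\'arolyi~\cite{karolyi} and invoked without proof inside the argument for Theorem~\ref{general-diam2}. There is therefore no ``paper's own proof'' to compare your attempt against.

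Viewed on its own merits, your proposal is a plausible outline but not yet a proof, and it has a genuine gap beyond the quantitative step you already flag. Non-frozenness of $Y$ guarantees \emph{some} $\pm1$ swap, but not one in the direction you need: if the only partially occupied block $B$ has $Y\cap B$ equal to a terminal segment of $B$, every in-block swap decreases $\sigma(Y)$, so a walk toward a larger target can stall at a non-frozen configuration. Your sentence ``the walk can stall only at a $Y$ \dots\ that is a union of whole blocks'' is therefore not justified; you would need either a stronger invariant (e.g.\ maintaining that every partially occupied block is occupied on an initial segment when walking upward) or a richer move set together with a bound on the smallest available step. Second, you explicitly defer the case analysis that is supposed to yield the constant $89$, and the two-block extremal configurations you name do not obviously exhaust the worst cases (configurations with several medium gaps can be just as obstructive). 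K\'arolyi's actual argument is considerably more delicate, and the specific threshold $89$ comes from that analysis rather than from the heuristic you sketch.
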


\begin{thm}[Caro, Yuster~\cite{caroyuster}]\label{num-edges}
For every fixed $\alpha > 0$ and for every $\epsilon > 0$ there exists $N=N(\alpha, \epsilon)$ 
so that for all $n > N$, if $G$ is a graph on $n$ vertices and at most $n^{2-\alpha}$ edges 
then there are two vertex  disjoint subgraphs of the same order and size with at least 
$n/2-\epsilon n$ vertices in each of them.  
\end{thm}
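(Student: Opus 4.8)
The plan is to find two disjoint vertex sets $A,B$ with $|A|=|B|\ge n/2-\epsilon n$ whose induced subgraphs carry the same number of edges, $e(G[A])=e(G[B])$, and then output $G[A],G[B]$ as the two subgraphs of equal order and size. (I read ``size'' as the induced edge count, which is the version needed later: in a diameter-$2$ graph the distance multiset of a set is determined by its order together with its number of induced edges, so the sparsity hypothesis is genuinely used.) My proof would proceed in three phases: trim the high-degree vertices, balance the edge counts coarsely by a discrete intermediate-value argument, and then fine-tune to exact equality.

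First I would trim. Since $G$ has at most $n^{2-\alpha}$ edges, $\sum_v\deg(v)\le 2n^{2-\alpha}$, so with $D:=\lceil 20\,n^{1-\alpha}/\epsilon\rceil$ the set $W$ of vertices of degree exceeding $D$ satisfies $|W|\le 2n^{2-\alpha}/D\le \epsilon n/10$. I then work inside $L:=V\setminus W$ (discarding one more vertex if needed so that $|L|$ is even); every vertex has degree at most $D$ in $G[L]$, and crucially $D=o(n)$. The point of trimming is that each vertex now carries at most $D$ incident edges, so any local modification of a partition of $L$ moves the two edge counts by only $O(D)$.

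Next comes coarse balancing by a discrete intermediate-value argument. Fix any balanced partition $L=A_0\cup B_0$ with $|A_0|=|B_0|$ and consider the mirror partition $(B_0,A_0)$, for which the signed difference $f:=e(G[A])-e(G[B])$ is negated. I would interpolate between the two by a sequence of single swaps, each exchanging one vertex of the current $A$ with one of the current $B$; these keep the parts balanced, and since every swapped vertex has induced degree at most $D$, each swap changes $f$ by at most $2D+1$. As $f$ runs from its initial value to its negative through integer steps of size $O(D)$, some partition along the way must satisfy $|f|\le D$. This yields balanced $A,B\subseteq L$ with $\delta:=|e(G[A])-e(G[B])|\le D=o(n)$; say $e(G[A])\ge e(G[B])$.

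Finally I would fine-tune the remaining gap $\delta$ down to $0$ by deleting vertices, deleting the same number from each side so that $|A|=|B|$ is preserved: deleting a vertex from a part lowers that part's edge count by its current induced degree, so by removing low-induced-degree vertices from the heavier side and matching-degree vertices (or whole small components) from the other side, one changes the gap in small controlled amounts. Only $O(\delta)=O(D)=o(n)$ vertices are touched, so the sizes drop by $o(n)$. The hard part is exactly this step: unlike the coarse phase, we must hit $f=0$ on the nose, an integrality/parity issue. For instance a union of triangles cannot shed a single edge, only two or three at a time, so one may be forced to combine a $3$-edge deletion on one side with a $2$-edge deletion on the other to move the gap by exactly one, and then repair the resulting vertex-count imbalance separately. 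Sparsity is what rescues the argument: after trimming there are few edges per vertex, hence many vertices of small induced degree and many small components, supplying enough independent ``dials'' to steer both the edge gap and the vertex-count parity to their exact targets. Summing the losses, the number of discarded vertices is at most $|W|+1+O(D)\le \epsilon n$ once $n>N(\alpha,\epsilon)$ is large enough that $O(n^{1-\alpha})\le \epsilon n/10$, and therefore $|A|=|B|\ge n/2-\epsilon n$ with $e(G[A])=e(G[B])$, giving the two required vertex-disjoint subgraphs of equal order and size.
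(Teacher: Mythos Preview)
The paper does not prove this theorem at all: it is quoted as a result of Caro and Yuster \cite{caroyuster} and then applied verbatim in the one-line proof of Theorem~\ref{dense-case}. So there is no ``paper's own proof'' to compare your attempt to; you are trying to reprove an external result.

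As for the merits of your sketch: Phases~1 and~2 are fine. Trimming the $\le \epsilon n/10$ vertices of degree above $D=\Theta(n^{1-\alpha}/\epsilon)$ is standard, and the swap-interpolation from $(A_0,B_0)$ to $(B_0,A_0)$ does force some intermediate balanced partition with $|e(G[A])-e(G[B])|\le D$, since each swap moves the difference by at most $2D$ and the endpoints have opposite signs.

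Phase~3, however, is not a proof but a hope, and you say so yourself. The assertions that ``sparsity rescues the argument'' and that there are ``many small components'' are unjustified: after trimming, $G[L]$ has maximum degree $D=O(n^{1-\alpha})$, but for $\alpha<1$ this is a growing function of $n$, and nothing prevents $G[L]$ from being connected with every vertex of degree $\Theta(n^{1-\alpha})$ and no vertices of degree $0$ or $1$ at all. Your triangle example already shows that local deletions cannot in general change an edge count by exactly one, and you give no mechanism that provably produces the needed parity/integrality corrections while keeping $|A|=|B|$. In short, the genuine content of the Caro--Yuster theorem lies precisely in achieving exact equality of both order and size, and that is the step you have not supplied. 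A cleaner route, closer to what Caro and Yuster actually do, is to aim directly for a bipartition $A\cup B=V'$ of a large subset $V'$ with $\sum_{v\in A}\deg(v)=\sum_{v\in B}\deg(v)$ and $|A|=|B|$, which forces $e(G[A])=e(G[B])$ by the identity used in the proof of Theorem~\ref{general-diam2}; this turns the problem into a controlled integer-partition question on the degree sequence rather than an ad hoc repair procedure.
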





\section{Proofs of main results} \label{Proofs}  

\begin{definition}
For a graph $H$, we say that $G$ is an {\bf $(H, m, v)$-flower}  with a path $P$ if
 $G$  is a vertex-disjoint union of $H$ and an $m$-vertex  path $P$ with endpoint $v$,
 together with all edges between $V(H)$ and $v$.
\end{definition}

  \begin{lem}\label{flower}
 Let $G$ be an $(H, m, v)$-flower with a path $P$. 
If $S_1$ and $S_2$ are homometric sets of $G$, where $|S_1|=|S_2| \geq 2$, 
 then  either $S_1\cup S_2 \subseteq V(P) \cup \{u\}$, $u \in V(H)$ or
 $S_1\cup S_2 \subseteq V(H) \cup \{v, v_1\}$, where $v_1$ is the neighbor of $v$ in $P$. 
  \end{lem}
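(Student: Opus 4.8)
The plan is to analyze the possible locations of a homometric pair $S_1, S_2$ relative to the flower structure, by tracking how distances in an $(H,m,v)$-flower behave. The key structural fact is that $v$ is a cut vertex separating $V(H)$ from $V(P)\setminus\{v\}$, so every path from a vertex of $P$ at distance $\ge 1$ from $v$ to a vertex of $H$ passes through $v$, and moreover $\operatorname{dist}_G(v, u) = 1$ for every $u\in V(H)$ while $\operatorname{dist}_G(u,u')\le 2$ for all $u,u'\in V(H)$. In particular, along the path $P = (v=v_1, v_2, \ldots, v_m)$ one has $\operatorname{dist}_G(v_i, u) = i$ for any $u\in V(H)$, and $\operatorname{dist}_G(v_i, v_j) = |i-j|$. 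So the ``diameter'' of $V(H)$ is at most $2$, while $P$ contributes arbitrarily large distances.

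First I would set $S = S_1 \cup S_2$ and write $S_P = S \cap (V(P)\setminus\{v\})$ and $S_H = S \cap V(H)$, with $v$ possibly in either (we can put it with $S_P$ by convention). The idea is: if $|S_P|\ge 2$ and $|S_H|\ge 2$, derive a contradiction with homometry. Let $v_a$ be the element of $S_P$ farthest from $v$ (say $a\ge 2$). Then for any $u\in S_H$, $\operatorname{dist}(v_a,u) = a \ge 2$, and in fact taking $u,u'\in S_H$ distinct, the pair $v_a$ together with $H$-vertices produces large distances $\ge a$, whereas the distances realized \emph{within} $S_H$ are all $\le 2$. I would count, in $D(S_1)$ and $D(S_2)$, how many pairs realize the maximum distance $M := \max D(S)$ and which vertices are involved; the extremal vertex $v_a$ must lie in one of $S_1, S_2$, and its partner(s) at distance $M$ constrain the other set. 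Pushing this: since $S_1$ and $S_2$ are disjoint and homometric, a careful bookkeeping of the largest few distances forces either all of $S_P$ into essentially one path-segment behavior, collapsing $S_H$ to at most one vertex, or symmetrically collapsing $S_P$ to at most one vertex (which must then be $v$ or its neighbor $v_1$, since any $v_i$ with $i\ge 2$ sits at distance $i$ from all of $H$ and distance $i-1\ge 1$ from $v$, and a singleton in $S_P$ contributes those distances to exactly one of $S_1,S_2$).

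Concretely, the two conclusions correspond to: (a) $|S_H|\le 1$, so $S\subseteq V(P)\cup\{u\}$ for a single $u\in V(H)$ — this is the first alternative; (b) $|S_P|\le$ "just $v$ and $v_1$", i.e. $S\subseteq V(H)\cup\{v,v_1\}$ — the second alternative. To rule out the genuinely mixed case $|S_H|\ge 2$ and $S_P$ reaching past $v_1$, I would argue that the multiset of distances from a fixed far vertex $v_a\in S_i$ to the rest of $S_i$ cannot be matched in $S_j$: the value $a$ appears in $D(S_i)$ with a multiplicity governed by how many vertices of $S_i$ lie in $H\cup\{v\}$ plus path vertices at the right offset, and one shows the counts of the top distance (and then the next) are off by the parity/size asymmetry that disjointness forces — essentially the same "balanced partition'' obstruction underlying Construction~\ref{balanced-ptn}.

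The main obstacle I anticipate is the bookkeeping in the mixed case: one must handle the interaction between the linear distances coming from $P$ and the $\{1,2\}$-valued distances inside $H$, and in particular the boundary vertices $v$ and $v_1$ (distance $0/1$ to $H$-neighbors, distance $1/2$ within). The cleanest route is probably induction on $|S_P|$ or on $a = \max\{i : v_i \in S\}$: peel off the farthest path vertex $v_a$, match its distance list on both sides, and reduce to a smaller flower or to a configuration where $S_P$ lies within $\{v, v_1\}$. I would also invoke the triviality that $|S_1|=|S_2|\ge 2$ to guarantee each $S_i$ actually contains a pair realizing some distance, so the extremal-distance argument has content.
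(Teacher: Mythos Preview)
Your plan is on the right track and matches the paper's approach (take the farthest path vertex, study the maximum distance), but you are anticipating far more bookkeeping than is actually needed: no induction or ``peeling'' is required, and the allusion to the balanced-partition construction is a red herring (that construction \emph{produces} homometric pairs, it is not an obstruction). The paper disposes of the mixed case with two direct observations. Assume the farthest vertex $x\in (S_1\cup S_2)\cap V(P)$ satisfies $\operatorname{dist}(x,v)=d_x\ge 2$, and say $x\in S_1$. First, if some $x'\in S_1\cap V(H)$ existed then $\operatorname{dist}(x,x')=d_x+1$, and this value cannot occur in $D(S_2)$ because the \emph{unique} path vertex at distance $d_x$ from $v$ is $x\notin S_2$; hence $S_1\subseteq V(P)$. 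Second, since $S_1$ lies on a path, the maximum $a$ of $D(S_1)$ has multiplicity one, hence multiplicity one in $D(S_2)$ as well. If $z^*$ is the farthest path vertex of $S_2$, then every $y\in S_2\cap V(H)$ satisfies $\operatorname{dist}(y,z^*)=d_{z^*}+1$, which dominates every path-internal distance in $S_2$; so two or more vertices of $S_2$ in $V(H)$ would force $a$ to occur at least twice in $D(S_2)$, a contradiction. That is the entire argument. One notational slip in your write-up: in the lemma $v_1$ denotes the \emph{neighbour} of $v$ on $P$, not $v$ itself, so your indexing $P=(v=v_1,\dots)$ should be shifted by one.
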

  
  \begin{proof}
Let $P$ be a path of a flower.  Let $S_1, S_2$ be homometric sets in $G$ of size at least $2$ each.  
Assume that $(S_1\cup S_2)  \cap V(P) \neq \emptyset$ and $(S_1\cup S_2)  \cap V(H) \neq \emptyset$.
Note that $D(V(H))$ consists of $1$s and $2$s.

Consider the vertex $x\in (S_1\cup S_2)\cap V(P) $,  farthest  from $v$,  at a distance at least $2$ from $v$.
Without loss of generality,  $x\in S_1$.  If there is $x'\in S_1\cap V(H)$ then there is no pair of vertices in $S_2$ 
with the distance equal to the distance between $x$ and $x'$. Thus, $S_1\cap V(H)= \emptyset$.
Let $a$ be the largest distance in $D(S_1)$, $a\geq 3$.
Let $y \in S_2 \cap V(H) $, let $y'$ be at a distance $a$ from $y$, $y \in V(P)$. 
Note that $a$ appears exactly once in $D(S_1)$ and thus it appears exactly once in $D(S_2)$. 
Since all vertices from $V(H)$ are at the same distance from $y$, there is exactly one vertex $y' \in S_2 \cap V(H)$.  
  \end{proof}

\begin{proof}[Proof of Theorem~\ref{upper-bd}]

For a fixed integer $k$,  
let $a_0, a_1,\ldots, a_k$ be a  sequence  of integers such that 
$a_0 = 1$,  $a_1\geq  5$ and each $a_i$, $i\ge 2$, is the smallest odd number satisfying 
$$
a_i > 4 \left( 1+\sum_{j=1}^{i-1}{a_j+1\choose 2}\right).
$$
Let  $n=  2(a_1+\dots +a_k) - k/4$. 
Let $H$ be a  vertex-disjoint union of cliques on $a_1, a_2, \ldots, a_k$ vertices, respectively. 
Let the vertex sets of these cliques be $Q_1, Q_2, \ldots, Q_k$, respectively.
Note that $a_j> 4\sum_{i=0}^{j-1}a_i$ for $j\ge 1$, which implies that 
\begin{equation}\label{vx-sum}
a_j\ge \frac{4}{5}(\sum_{i=0}^j a_i).
\end{equation} 

Let $G$ be an $(H, n/2 - k/8,  v)$-flower with a path $P$. 
Note that $k = c \log\log n$, for a constant $c$.
This construction of $G$ is inspired by an example given by Caro and Yuster in~\cite{caroyuster}.  \\

Let $S_1$, $S_2$ be largest homometric sets in $G$.  By Lemma \ref{flower}, 
$S_1\cup S_2 \subseteq V(P) \cup \{u\}$, $u \in V(H)$ or
 $S_1\cup S_2 \subseteq V(H) \cup \{v, v_1\}$, where $v_1$ is the neighbor of $v$ in $P$. 
~\\
 
{\it Case 1~~}  $S_1\cup S_2 \subseteq V(P) \cup \{u\}$, $u \in V(H)$. \\
 Then $h(G) \leq  (|V(P)|+1)/2 = n/4-k/16 +1/2 = n/ 4 - c\log\log n$,  for a positive constant $c$.\\
 
 {\it Case 2~~}  $S_1\cup S_2 \subseteq V(H) \cup \{v, v_1\}$, where $v_1$ is the neighbor of $v$ in $P$.\\
  Let $\{v_1\} = Q_0$, then $S_1\cup S_2$ is a vertex subset of a join of vertex-disjoint cliques on sets $Q_0, \ldots, Q_k$ and $\{v\}$. 
 Since $G[V(H) \cup \{v, v_1\}]$ is a graph of diameter $2$, $S_1$ and $S_2$ induce the same number of edges in $G$. 
For $i =0, \ldots, k$, let $ Q_i' = Q_i \cup \{v\}$ if $v\in S_1\cup S_2$, and let $Q_i'=Q_i$ if $v\not\in S_1\cup S_2$.

 If $v\not\in S_1\cup S_2$ and  for all $j$, $|S_1\cap Q'_j|= |S_2 \cap Q'_j|$, then, 
since $Q_j$ has odd size,  $ (S_1 \cup S_2) \cap Q_j  \neq Q_j$,  and 
  $h(G) \leq (|V(H)|-k)/2 = n/4  + k/16 - k/2 =  n/2 - c' \log\log n$, for a positive constant $c'$.
 If $v\in S_1\cup S_2$ and  for all $j$,  $|S_1\cap Q'_j| = |S_2 \cap Q'_j|$, then  $|S_1|\neq |S_2|$, since $v\in Q'_j$ for all $j$.
So,we can assume that there is $j$, for which $|S_1\cap Q'_j|\neq |S_2 \cap Q'_j|$. 
Let $j$ be the largest such index and  let $Q'_j=Q$.

Assume first that $j\le k/2-2$ and $ v\not \in S_1\cup S_2$.
For each $i>j$, $(S_1 \cup S_2) \cap Q_i \neq Q_i$. Thus $h(G)\le (n/2 + k/8 - k/2+2)/2 = n/4 - c \log\log n$,  for a positive constant $c$. 

Now assume that $j\le k/2-2$ and $ v \in S_1\cup S_2$.  Without loss of generality, let $v\in S_1$.
Let  ${\bf Q}=  Q_0 \cup \cdots \cup Q_{j}$,  ${\bf Q'}=  Q_{j+1} \cup \cdots \cup Q_{k}$.
Then $|S_2\cap {\bf Q'}|-|S_1\cap {\bf Q'} |  \ge k-j-1 \ge k/2+1$.
Since $|S_1\cap ({\bf Q}\cup  {\bf Q'}\cup \{v\}) |= |S_2\cap ({\bf Q}\cup  {\bf Q'}\cup \{v\}) |$,   \quad
$|{\bf Q}| \geq -|S_2\cap {\bf Q}| +  |S_1\cap {\bf Q}|  \geq  k/2.$ 
Therefore, \eqref{vx-sum} implies that 
$a_j/2\ge \frac{4}{10}|{\bf Q}| \geq  \frac{2}{5} (k/2)\ge k/5.$ \\
So, we have that either $a_j/2\ge k/5 $ or that $j\geq k/2+2$.  
In any case, we have that $a_j/2\ge k/5$. 

If $|Q- (S_1\cup S_2)| \geq a_j/2 $, 
then $h(G)\le (n/2 + k/8 - a_j/2 )/2\le (n/2 +k/8 - k/5)/2\leq n/4 - c\log\log n $, for a positive constant $c$. 
Otherwise,   $|Q\cap (S_1\cup S_2) | >a_j/2$.  
Then,  the number of edges induced by $S_1 \cap Q$ differs from the 
number of edges induced by $S_2 \cap Q$ by at least $a_j/4$. 
The number of edges induced by $S_1 \cap  ({\bf Q'}\cup \{v\})$ is 
the same as number of edges induced by $S_2 \cap   ({\bf Q'}\cup \{v\})$. 
The total number of edges induced by ${\bf Q}\cup \{v\}$ is at most 
$ 1+\sum_{i=1}^{j-1}{a_i+1\choose 2}< a_j/4$.  Thus $S_1$ and $S_2$ can not induce the same number of edges, a contradiction.
 \end{proof}

\begin{proof}[Proof of Theorem~\ref{diam-kg}]

The fact that $2h(G) \geq d$ follows from Construction \ref{c-apy}.   For positive integers $k,n$ ($k<n$),  the {\em Kneser graph} $KG(n,k)$ is a graph on the vertex set ${[n]\choose k}$ 
whose edge set consists of pairs of  disjoint $k$-sets. 
Lov\'asz~\cite{lovasz} proved that the chromatic number of the Kneser graph $KG(n,k)$ 
is $n-2k+2$.  
We fix $k\le n/2$ and consider the Kneser graph $\mathcal{K} = KG(n,k)$. 
Considering a graph $G$ with vertex set $[n]$, 
we define a coloring of $\mathcal{K}$ 
by letting the distance multiset of each $k$-subset of $V(G)$ be 
the color of the corresponding vertex in $\mathcal{K}$. 
Since any vertex pair in $G$ has distance in $\{1,\dots,d\}$, 
the number of possible colors that are used on $\mathcal{K}$ 
is at most ${{k \choose 2}+d-1 \choose d-1}$.   
If 
\begin{equation}\label{kn-aim}
{{k \choose 2}+d-1 \choose d-1} < n-2k+2,
\end{equation}
then  there are 
two adjacent vertices of the same color in $\mathcal{K}$ that 
correspond to a pair of disjoint $k$-subsets of $V(G)$ that are homometric. \\

\noindent
Note that when $d=2$, we have ${{k \choose 2}+d-1 \choose d-1} = {k \choose 2}+1< n -2k+2$
for  $k = \sqrt{n}$.\\

\noindent
Let  $v\in V(G)$ be a vertex of a maximum degree $\Delta(G)$. The closed neighborhood 
$N[v]$ induces a graph of diameter $2$. Moreover, for any two vertices in $N[v]$,  
the distance between them in $G$ is the same as in $G[N[v]]$.
As before, we see that  (\ref{kn-aim}) with $d=2$ and $n= \Delta$ holds for  $k \geq \sqrt{\Delta(G)}$.
Therefore,  $h(G[N[v]]) \geq \sqrt{\Delta(G)}\geq  \sqrt{(2e/n)+1}$. 
This proves the first part of the theorem.\\

\noindent
To prove the second statement of the theorem,  we  assume that $d\ge 3$ and  $n\ge d^{2d-2}$ and show 
 that  for any $k$, such that $d/2 < k\le 0.5n^{1/(2d-2)}$, the inequality  \eqref{kn-aim} holds.
Since $d-1 \le k^2/2$, \;$2^{d-2}\le (d-1)!$, and  $k\le 0.5n^{1/(2d-2)}\le n/4$,  we have that 
 $${{k \choose 2}+d-1 \choose d-1}<  \frac{\left({k \choose 2}+d-1\right)^{d-1}}{(d-1)!} \le    \frac{(k^2)^{d-1}}{(d-1)!  }\le \frac{(k^2)^{d-1}}{2^{d-2}}
  \le \frac{n}{2^{d-2}}\le \frac{n}{2}\le n-2k + 2. $$
Therefore, there are homometric sets of size $k$ for any $k$, 
$d/2\le k\le 0.5n^{1/(2d-2)}$. 
\end{proof}

 \begin{lem}\label{lem-main}
Let $T$ be a tree, $r$ be its vertex,  and $S$ be an antichain in $P(T,r)$  such that 
each vertex $ S$, has a  sibling  in $S$.  Let $k'=k'(S)$ 
be the number of maximal odd sets of siblings in $S$. 
Then 
$$
2h(T) \geq \max\left\{\diam(T)+1,~ |S|-k',~ \frac{2}{3}|S|,  ~
d_1(T) - bad(T), ~\frac{n}{\diam(T)} - bad(T), ~\frac{n-  bad_l(T)}{\diam(T)}\right\}.$$
\end{lem}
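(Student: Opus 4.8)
The plan is to establish each of the six lower bounds on $2h(T)$ separately, since $h(T)$ is at least the maximum whenever it is at least each term. The first bound, $2h(T)\ge \diam(T)+1$, is immediate from Construction~\ref{c-apy} applied to a diametral path of $T$: splitting a shortest path on $\diam(T)+1$ vertices into its two halves yields two homometric sets each of size $\lceil(\diam(T)+1)/2\rceil$, hence $2h(T)\ge\diam(T)+1$ (with the usual rounding conventions). The bounds $2h(T)\ge|S|-k'$ and $2h(T)\ge\frac23|S|$ come from Construction~\ref{balanced-ptn}: writing $S$ as a disjoint union of maximal sibling families $S'_1,\dots,S'_k$, each $S'_i$ has even size except for $k'$ of them, and splitting each $S'_i$ evenly (discarding one vertex from each odd family) produces $S_1,S_2$ with $|S_1|=|S_2|=\frac12(|S|-(\text{number of odd families among the }S'_i))\ge\frac12(|S|-k')$; and since each odd family has size at least $3$, the discarded fraction is at most $1/3$, giving $|S_1|=|S_2|\ge\frac13|S|$, i.e. $2h(T)\ge\frac23|S|$.

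The remaining three bounds, $d_1(T)-bad(T)$, $\frac{n}{\diam(T)}-bad(T)$, and $\frac{n-bad_l(T)}{\diam(T)}$, are where the real work lies, and I would handle them by exhibiting a suitable antichain $S$ with every vertex having a sibling in $S$, then invoking the $|S|-k'$ bound just proved. The idea is to pass to the \emph{cleaned} tree $T'$: removing all bad paths (one shortest pendent path at each bad vertex of $T$) turns every vertex of degree $\ge 3$ into one with an even number of pendent paths. In $T'$, choose a root $r$ and, for each remaining pendent path, take the antichain consisting of the vertices at a fixed depth — more carefully, one shows there is an antichain $S$ in $P(T',r)$ in which vertices pair up as siblings and $|S|$ is large. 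Counting: each bad vertex removed costs at most one unit toward $k'$ and at most the length of its bad path toward $|S|$, which is how the terms $bad(T)$ (a per-bad-vertex charge of $1$) and $bad_l(T)$ (the total length of bad paths) enter. For the $d_1(T)-bad(T)$ bound one takes $S$ to be (essentially) the set of leaves of $T'$, pairing sibling leaves; for the $\frac{n}{\diam(T)}$-type bounds one partitions the edge set / vertex set of $T'$ into at most $\diam(T)$ antichains by depth modulo considerations and picks the largest, so that $|S|\ge (n - bad_l(T))/\diam(T)$ or $\ge n/\diam(T) - bad(T)$ after accounting for the cleanup.

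The main obstacle I anticipate is the bookkeeping in the cleaning step: verifying that after deleting bad paths one can genuinely pair \emph{every} selected vertex with a sibling (this is exactly what Construction~\ref{balanced-ptn} requires), and that the loss in $|S|$ and the increase in $k'$ are controlled by precisely $bad(T)$, $bad_3(T)$, or $bad_l(T)$ as the three bounds demand — in particular distinguishing the per-vertex charge ($bad(T)$) from the total-length charge ($bad_l(T)$). A secondary subtlety is that Construction~\ref{balanced-ptn} needs $S$ to be an antichain in a \emph{single} $r$-order, so the depth-levelling must be done consistently from one root; choosing $r$ to be an endpoint of a diametral path, or a centroid, should make the "$\le\diam(T)$ levels" count work. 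Once the antichain is built and these counts are checked, all three remaining inequalities follow by substituting into $2h(T)\ge|S|-k'$, and the lemma is complete.
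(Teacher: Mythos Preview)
Your handling of the first three bounds is correct and matches the paper. The gap is in your plan for the last three. Depth-levelling from a root gives antichains $N_i$, and pigeonhole indeed yields some $|N_i|\ge (n-bad_l(T))/\diam(T)$, but an arbitrary level $N_i$ need \emph{not} satisfy the sibling hypothesis of Construction~\ref{balanced-ptn}: a vertex at depth $i-1$ may have exactly one child at depth $i$, even in the cleaned tree, because cleaning only controls the parity of \emph{pendent paths} at a branch vertex, not the parity of its children at a fixed depth. So you cannot invoke the $|S|-k'$ bound on a depth level and get the stated inequalities; you have correctly flagged this as ``the main obstacle,'' but your proposed mechanism does not resolve it.

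The paper sidesteps this by working with \emph{leaves} rather than depth levels. Two ingredients you are missing: (i) the leaf set is a maximum antichain in $P(T,r)$ (Dilworth's theorem, via the chain cover by root-to-leaf paths), so $d_1(\Tau)\ge |V(\Tau)|/\diam(\Tau)$ for any tree $\Tau$; and (ii) one needs the \emph{trimmed} tree, not merely the cleaned one --- after deleting bad paths one also shrinks each remaining pendent path to its single attachment vertex. In the trimmed tree every leaf is such an attachment vertex, and these now come in even-sized sibling groups at each branch vertex, so $k'=0$ and $2h(T)\ge d_1(T)-bad(T)$ follows. Combining with (i) gives $2h(T)\ge n/\diam(T)-bad(T)$. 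For the final term one applies the just-proved leaf bound to the cleaned tree $T''$, which has $bad(T'')=0$ and $|V(T'')|=n-bad_l(T)$, and uses (i) once more. Your plan to take ``the set of leaves of $T'$'' for the $d_1(T)-bad(T)$ bound is close, but in the merely cleaned tree two pendent paths at the same branch vertex can have different lengths, so their leaves are not siblings; trimming is what makes that step go through.
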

   
 \begin{proof}
The fact that $2h(T) \geq \diam(T)+1$ and $2h(T) \geq |S|-k'$ follows immediately from Constructions \ref{c-apy} and \ref{balanced-ptn}.
Since each maximal family of siblings in $S$ has at least two elements, 
$k' \leq |S|/3$, thus $2h(T) \geq 2|S|/3$. \\

\noindent
For any tree $\Tau$, Dilworth's theorem applied to $P(\Tau)$ implies that the size 
of a  maximum antichain in $P(\Tau)$  is  at least $n/ \diam(\Tau).$
The set of leaves is a maximum antichain of $P(\Tau)$, thus $d_1(\Tau) \ge n/\diam(\Tau).$\\

\noindent
Let $T'$ be trimmed $T$. 
Since $T'\subseteq T$, $h(T) \geq h(T')$. 
Let $S$ be the set of leaves of $T'$. Note that $|S|= d_1(T')= d_1(T)- bad(T)$.  Moreover, 
since $T'$ has no bad vertices, $k'(S)=0$ in $T'$.   So,  $2h(T) \ge |S| - k'(S) = d_1(T)  - bad(T) \ge {n}/{\diam(T)} - bad(T).$\\

\noindent
Let $T''$ be cleaned $T$. We have that 
$|V(T) \setminus V(T')| = bad_l(T)$, moreover, $T''$ has no bad vertices.
Thus $2h(T) \geq 2h(T'') \geq |d_1(T'')|\ge (n - bad_l (T))/\diam(T'')\geq (n-bad_l(T))/\diam(T)$.  
\end{proof}

\begin{proof}[Proof of Theorem~\ref{trees-weaker-bd}]
Assume that $v$ is a vertex contained in the center of $T$ and 
let $t = \lceil \diam(T)/2\rceil$.  Let $P=P(T, v)$.
We define a partition 
$V(T) = N_0 \cup N_1\cup N_2 \cup \dots \cup N_t$, 
where $N_i=N_i(v)$.  So, $N_i$ is an antichain in $P$, $i=1, \ldots, t$.
Since $h(T)\ge t$, we can assume that $t \leq n^{1/3} -1$, and thus  $\diam(T)\le 2n^{\frac{1}{3}}-2$. 
Let  $x$ be the smallest integer such that $|N_i| \le |N_{i-1}|+x-1$  for $1\le i \le t$. Then 
$n = 1+ \sum_{i=1}^t |N_i| \le 1+ \sum_{i=1}^t ix\leq   x (t+1)^2/2.$ 

Since $t \le n^{1/3}-1$, by the above inequality we have $x \ge 2n^{1/3} $. 
For some $j$, $1\le j \le t$, $|N_j| - |N_{j-1}| = x-1$.  
 Let $S$ be a largest  subset of $N_j$  such that each vertex in $S$ has a sibling in $S$. 
 Recall that  $k'(S)$ is  the number of maximal sets of siblings in $S$ of odd size, i.e., 
 $k'(S)$ is the number of vertices of $N_{j-1}$ with odd number of neighbors in $N_j$.
So, $|S|-k'(S) \geq |N_j|-|N_{j-1}| = x-1 \geq 2n^{1/3} -1$. 
By  Lemma~\ref{lem-main}, $h(T)  \ge  n^{1/3} - 1$.
\end{proof}

%
%
%
%
%

\begin{proof}[Proof of Theorem \ref{special}]
Let $T$ be a caterpillar on $n$ vertices and 
assume that its spine $P$ has at least $n/3$ vertices. 
Then, by Theorem \ref{diam-kg}, $h(T)\ge n/6$. 
If $P$ has less than $n/3$ vertices, then 
there are at least $2n/3$ leaves. 
Assume that there are exactly $k$ vertices on the spine with degree at least 4, 
label them as $v_1,\dots,v_k$.  
Apply Lemma \ref{lem-main} to a set $S$ consisting of leaves incident to a vertex in 
$v_1, \ldots, v_k$.
Then $k'(S)\le k$ and 
 $
2h(T)\ge  |S| - k'(S) \ge (n - |V(P)|) - k   \ge   n - 2|V(P)| \ge \frac{n}{3}. 
$\\

 \noindent
 Let $T$ be a haircomb with $m$ vertices on its spine 
and $k$ spinal vertices, where $k\le m$.  
We denote the length of the $i$th leg of $T$ with $l_i$. 
Since $(k+1)\max(l_1,\dots,l_k,m) \ge m+\sum_1^k l_i = n$, 
either $k+1 \ge \sqrt{n}$ implying $m\ge k+1 \ge \sqrt{n}$ or 
$l_i\ge \sqrt{n}$ for some $i$, which implies that $\diam(T)\ge \sqrt{n}$. 
By Theorem \ref{diam-kg}, $2h(T) \ge \sqrt{n}$. 
\end{proof}

\begin{proof}[Proof of Theorem~\ref{spiders}]
~\\
Let $m$ be an integer such that $k=2m$ or $k=2m+1$.
Using Construction \ref{k-legs}, we see that 
\begin{equation}\label{spdr-bd}
2h(G) \geq n- \left[ (l_1-l_2) + (l_3-l_4 ) + \ldots + (l_{2m-1}-l_{2m}) + x +1\right] \geq  n- l_1+l_{2m}-x-1,
\end{equation}
where $x= 0$ if $k$ is even and $x= l_k$ if $k$ is odd. 
This observation is used in the following cases. 

\begin{itemize}
\item{}  Let  $k=3$. Let $\min(l_1 - l_2, l_2 - l_3) = cn$ for some $c \ge 0$, where $l_1\ge l_2\ge l_3.$ 
By Construction \ref{three-legs},  we have $ 2h(T) \ge n - cn$.
Without loss of generality,  assume that $l_1 - l_2  =cn$. 
 Then  $l_1 = l_2 + cn$ and $l_2 \ge l_3 + cn$ by our assumption and therefore, 
$ n = l_1 + l_2 + l_3 +1 \ge 3l_3 + 3cn$. 
This implies that $l_3\le (n - 3cn)/3$ and by Lemma \ref{lem-main}, 
$2h(T) \ge l_1 + l_2 + 1 = n - l_3  \ge 2n/3 + cn$. 
Thus $$2h(\mathcal{S}_{n,3}) \geq 
\min_{0\leq c\leq 1} \max\left\{n - cn,  \frac{2n}{3} + cn \right\} 
\ge 5n/6.$$

\item{} Let  $k=4$.   We have  $ 2h(T) \ge \diam(T)+1\ge l_1 + l_2+1$ and 
\eqref{spdr-bd} provides that $2h(T)\ge n - (l_1 - l_4) - 1$. 
Adding these inequalities gives that 
$4h(T) \ge n + l_2 + l_4. $ 
By letting $l_2 + l_4 = c'n$, for some $c'$, $0\leq c'\leq 1$, 
we rewrite this bound as
$4h(T) \ge (1+c')n .$ 
On the other hand, 
$2h(T) \ge \diam(T) +1 \ge l_1 + l_3 + 1 = (1 - c')n$. 
Thus, 
$$2h(\mathcal{S}_{n,4}) \geq \min_{0\leq c'\leq 1} 
\max\left\{\frac{(1+c')n}{2},  (1 - c')n\right\}\ge 2n/3.$$

\item{} Let $k \geq 5$. Let $T$ be a spider on $n$ vertices with $k$ legs 
having  $\ell_1\geq \ell_2 \geq \cdots \geq \ell_k$ vertices, respectively,  and the head  $v$.  
Observe that $l_2\ge (n-l_1-1)/(k-1)$.  Assume that $2h(T) = cn$ for some $c>0$. 
By Lemma \ref{lem-main},  $2h(T) = cn \ge \diam(T)+1 \ge l_1+l_2+1\ge l_1 + (n-l_1-1)/(k-1) + 1.$
Thus, $-l_1 \ge n(1-ck+c)/(k-2)+1.$  Using \eqref{spdr-bd},  
$cn = 2h(T) \ge n-l_1-1 \ge n +  \frac{n(1-ck+c)}{k-2}.$
Since $k$ is fixed, this implies that $c\ge 1/2 + 3/(4k-6),$ i.e., $h(T) \ge n/4 + 3n/(8k-12).$

The bound $h(\mathcal{S}_{n,k}) \geq (n+2)/4$ for $k\ge 5$ 
is attained by a spider  $T$ with $n/2$ single-vertex  legs,  and one leg, $P$,  with $n/2-1$ vertices. 
If $H$ is an empty graph on  $n/2$ vertices, then $T$ is an $(H,   n/2-1, v )$-flower, where $v$ is the head of $T$. 
 Let $S_1, S_2$ be homometric sets of $T$.
Then by Lemma~\ref{flower}, 
$S_1\cup S_2 \subseteq V(H) \cup \{v, v_1\}$ or $ S_1\cup S_2 \subseteq V(P)\cup \{v, u\}$, where $u\in V(H)$, $v_1\in V(P)$ is adjacent to $v$. 
In the first case, we can easily see that $v \not\in S_1\cup S_2$, so  $2h(T) \leq n/2+1$. 
\end{itemize}
\end{proof}


\begin{proof}[Proof of Theorem~\ref{dense-case}]
In a graph with diameter at most $2$, 
any two distinct vertices are at  distance   $1$ or $2$. 
Therefore, Theorem~\ref{num-edges} implies this result.  
\end{proof}

%
%



%

\begin{proof}[Proof of Theorem~\ref{general-diam2}]

 First, we give a fact observed in \cite{caroyuster} stating that 
 if $A\subseteq V(G)$ and $\sum_{v \in A }\deg(v) = \sum_{v \not\in A} \deg(v)$ then 
$|E(G[A])|=|E(G[V-A])|$.   To see this,  note that 
$|E(G[A])|=(1/2)  ((\sum_{v \in A} \deg(v)) - |E(A, V-A)|)$ and 
$|E(G[V-A])|=(1/2) ((\sum_{v\in V- A} \deg(v))  - |E(A, V-A)|).$ 
So, the difference of these two numbers is 
$ \frac{1}{2} [\, \sum_{v \in A }\deg(v) -\sum_{v \not\in A} \deg(v)\,] = 0$.

\noindent
Let $\mathcal{D} = \{0,1,\dots,\Delta(G)\}$
and $i \in \mathcal{D}$. 
Let    
$$V_i = \{v\in V: \deg(v)=i\}, \quad V_i = A_i\cup B_i \cup S_i,$$ 
where $A_i, B_i, S_i$ are disjoint,  $|S_i|\leq 1$ and $|A_i|=|B_i|$.  
I.e., split $V_i$ into two equal parts if possible, and otherwise, 
put a remaining vertex into a set $S_i$.

\noindent
 Let $$A' = \bigcup _{i \in \mathcal{D}} A_i,
\quad  B'= \bigcup_ {i \in \mathcal{D}} B_i, 
\quad S= \bigcup_{i \in \mathcal{D}} S_i.$$
 So, $V(G) = A'\cup B'\cup S$.  Let $s(G) =|S|$. 
 We shall show that there are disjoint  subsets $A, B$ of $n/2$ vertices each, such that 
$\sum_{v\in A} \deg(v) = \sum_{v\notin A} \deg(v)$, i.e.,  
$A$ and $B$ induce the same number of edges in $G$.

If $|V_i|$ is even for each $i$, let $A= A'$ and $B= B'.$ 

Now, assume that $m=|\{i: |V_i|\, \text{ is odd}\}|>n/2.$ 
Note that all vertices in $S$ have distinct odd degrees $a_1, a_2, \cdots, a_m $, 
say $1\leq a_1 <a_2 <\cdots < a_m <n$, and since $m> n/2$, $n<2m.$ 
Note also that $m$ is even, since the total number of vertices of odd degree is even. 
So, we could apply Theorem~\ref{kar}  
and split $\{a_1, \ldots, a_m\}$ in two parts, 
$U$ and $U'$  of equal sizes and with almost equal sums. 
Since 
$\sum _{v\in V} \deg(v) = \sum _{v\in A'} \deg(v)  + 
\sum _{v\in B'} \deg(v) + \sum_{v\in S} \deg(v) = 
2\sum_{v\in A'} \deg(v) + \sum_{v\in S} \deg(v)$, and 
this degree-sum is even, it follows that $\sum_{v\in S} \deg(v)$ is even. 
Thus $\sum_{i=1}^{m} a_i =\sum_{v\in S} \deg(v)$ is even, 
and the sum of elements in $U$ is exactly equal to the sum of elements in $U'$. 
Let 
$$A'' = \{ v\in S: \deg(v) \in U\},\quad B'' = \{v\in S: \deg(v) \in U'\}.$$ 
Finally, let $A= A' \cup A''$ and $B = B'\cup B''.$ 
\end{proof}




\section{More results on trees}\label{More-Trees}

For a tree $T$, let $d_i=d_i(T)$ be the number of vertices of degree $i$.

\begin{lem}\label{bound-sib}
~\\
a) $ d_1 - bad(T) \ge 2+ \sum_{i\ge 4} d_i$,\\
b)  $d_1 \ge 2/3 ( n - d_2 - d_3)$,  and \\
c) $\sum_{i\ge 4} d_i \le (1/3) (n - d_2 - d_3)$. 
\end{lem}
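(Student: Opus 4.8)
The plan is to prove the three inequalities in Lemma~\ref{bound-sib} by a careful bookkeeping argument on the degree sequence of a tree, using the two basic identities $\sum_i d_i = n$ and $\sum_i i\,d_i = 2(n-1)$ together with the combinatorial meaning of ``bad vertices.''

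For part~(a), first I would rewrite the handshake and vertex-count identities to isolate $d_1$: subtracting $2\sum_i d_i = 2n$ from $\sum_i i\,d_i = 2n-2$ gives $\sum_i (i-2)d_i = -2$, i.e. $d_1 = 2 + \sum_{i\ge 3}(i-2)d_i = 2 + d_3 + \sum_{i\ge 4}(i-2)d_i$. So the ``raw'' leaf surplus over $2 + \sum_{i\ge4} d_i$ is $d_3 + \sum_{i\ge4}(i-3)d_i \ge d_3 \ge 0$. Now I need to account for $bad(T)$. A vertex $x$ of degree $\ge 3$ is bad only if it has an odd number of pendent paths; the key point is that each bad vertex can be charged to a distinct ``extra'' leaf that does not appear in the count $2 + \sum_{i\ge4} d_i$. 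Concretely, a bad vertex $x$ of degree $r\ge 3$ has at least one pendent path, whose far endpoint is a leaf; distinct bad vertices have vertex-disjoint pendent-path systems, so these leaf endpoints are distinct. The cleanest route is probably: among the $d_1$ leaves, at least $bad(T)$ of them are ``absorbed'' by the parity defect of bad vertices in such a way that $d_1 - bad(T)$ still exceeds $2 + \sum_{i\ge4} d_i$. I would make this precise by a discharging/induction on the number of bad vertices: removing a shortest pendent path of a bad vertex $x$ decreases $d_1$ by $1$, decreases $bad(T)$ by at least $1$ (either $x$ stays of degree $\ge3$ with one fewer pendent path and flips parity to non-bad, or $x$'s degree drops), and does not increase $\sum_{i\ge4} d_i$; then apply the inequality to the smaller tree. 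The base case is a tree with no bad vertices, where $d_1 \ge 2 + \sum_{i\ge4} d_i$ follows directly from $d_1 = 2 + d_3 + \sum_{i\ge4}(i-2)d_i \ge 2 + \sum_{i\ge4} d_i$ since each $i\ge4$ contributes $i-2\ge 2 \ge 1$. The hard part here is verifying the bookkeeping in the induction step — in particular that $bad(T)$ really drops by at least $1$ and that no new bad vertices are created — so I would state the effect of deleting a bad path as a separate small claim and check the cases (the attachment vertex either remains a branch vertex with one fewer pendent path, or becomes a degree-$2$ or degree-$1$ vertex).

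For part~(b), starting from $d_1 = 2 + d_3 + \sum_{i\ge4}(i-2)d_i$ and $n = d_1 + d_2 + d_3 + \sum_{i\ge4} d_i$, I would eliminate to get $n - d_2 - d_3 = d_1 + \sum_{i\ge4} d_i = 2 + d_3 + \sum_{i\ge4}(i-2)d_i + \sum_{i\ge4} d_i = 2 + d_3 + \sum_{i\ge4}(i-1)d_i$. I want $d_1 \ge \tfrac23(n-d_2-d_3)$, i.e. $3d_1 \ge 2(n-d_2-d_3)$, i.e. $3\bigl(2 + d_3 + \sum_{i\ge4}(i-2)d_i\bigr) \ge 2\bigl(2 + d_3 + \sum_{i\ge4}(i-1)d_i\bigr)$, which simplifies to $2 + d_3 + \sum_{i\ge4}(3(i-2) - 2(i-1))d_i \ge 0$, i.e. $2 + d_3 + \sum_{i\ge4}(i-4)d_i \ge 0$; this is true since $i-4\ge 0$ for all $i\ge 4$. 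So (b) is a pure inequality manipulation with no obstacle.

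For part~(c), I would use the two expressions just derived: $n - d_2 - d_3 = 2 + d_3 + \sum_{i\ge4}(i-1)d_i \ge 2 + \sum_{i\ge4}(i-1)d_i \ge 2 + 3\sum_{i\ge4} d_i \ge 3\sum_{i\ge4} d_i$, using $i-1\ge 3$ for $i\ge4$. Hence $\sum_{i\ge4} d_i \le \tfrac13(n-d_2-d_3)$, which is exactly~(c). I expect the only genuinely delicate part of the whole lemma to be the $bad(T)$ accounting in~(a); parts (b) and (c) are immediate consequences of the handshake identity once the substitutions are written out, and I would present them in that order, deriving the shared identity $n - d_2 - d_3 = 2 + d_3 + \sum_{i\ge4}(i-1)d_i$ once and reusing it.
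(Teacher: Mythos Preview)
Your treatments of parts~(b) and~(c) are correct and essentially identical in spirit to the paper's: both are immediate from the identity $d_1 = 2 + \sum_{i\ge 3}(i-2)d_i$ together with $n = \sum_i d_i$.

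For part~(a), however, your inductive plan has a genuine gap, and you are working much harder than necessary. The paper's argument is a single line: since every bad vertex has degree at least~$3$, one has the trivial bound
\[
bad(T) \le \sum_{i\ge 3} d_i = d_3 + \sum_{i\ge 4} d_i,
\]
and combining this with $d_1 = 2 + d_3 + \sum_{i\ge 4}(i-2)d_i \ge 2 + d_3 + 2\sum_{i\ge 4} d_i$ gives $d_1 - bad(T) \ge 2 + \sum_{i\ge 4} d_i$ immediately. No induction, no pendent-path surgery.

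Your induction, by contrast, does not close. Consider a bad vertex $x$ of degree~$4$ (with, say, three pendent paths). Removing one pendent path drops $d_1$ by~$1$, drops $bad(T)$ by~$1$ (since $x$ now has two pendent paths), and drops $\sum_{i\ge 4} d_i$ by~$1$ (since $x$ now has degree~$3$). Hence the quantity $d_1 - bad(T) - \sum_{i\ge 4} d_i$ is \emph{larger} for the smaller tree $T'$ than for $T$; the inductive hypothesis $[\,\cdot\,](T')\ge 2$ only yields $[\,\cdot\,](T)\ge 1$. So the step ``does not increase $\sum_{i\ge 4} d_i$'' is true but works against you, and the case analysis you defer to cannot rescue the argument without strengthening the hypothesis---which amounts to the direct count anyway. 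The idea you are missing is simply that $bad(T)$ admits the crude upper bound ``number of vertices of degree $\ge 3$''; once you see that, (a) falls out with no combinatorics on pendent paths at all.
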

\begin{proof}
Observe first that $d_1 = 2+ \sum_{i\ge 3}d_i(i-2)$.  
Thus $d_1 \ge 2 + \sum_{i\ge 4} 2d_i.$

Since bad vertices have degree at least $3$,  we have 
that  $bad(T) \le  d_3 + \sum_{i\ge 4} d_i$, so $ d_1 - bad(T) \geq   2+ \sum_{i\ge 4} d_i$.   
We also have that  $d_1\ge \sum_{i\ge 4} 2d_i = 2(n - d_1 - d_2 - d_3)$,  thus  $d_1 \ge 2/3 ( n - d_2 - d_3).$
To show the last inequality, observe that $\sum_{i\ge 4} d_i = n- d_1-d_2-d_3$, so using (b), 
$\sum_{i\ge 4} d_i = n- d_1-d_2-d_3 \geq n- 2/3 ( n - d_2 - d_3) -(d_2 +d_3) = 1/3 ( n - d_2 - d_3).$
\end{proof}

\begin{thm}\label{general-trees} 
Let $T$ be a tree on $n$ vertices. 
Then  $h(T) = \Omega(\sqrt{n})$ if one of the following holds: \\
a) $bad(T) = o(\sqrt{n})$, \\
b) $bad_3(T) =o(\sqrt{n})$ and $bad(T)= \Omega(\sqrt{n})$, \\
c) $bad_l(T) = o(n)$.\\
Moreover $h(T) \geq \max\{(n- d_2 - 4 d_3)/6, (n-d_1-d_2)/(\diam(T)+1)\}.$
 \end{thm}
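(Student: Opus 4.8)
The plan is to derive everything from Lemma~\ref{lem-main}, which already packages the bounds
$$2h(T)\ge \max\left\{\diam(T)+1,\ d_1(T)-bad(T),\ \frac{n}{\diam(T)}-bad(T),\ \frac{n-bad_l(T)}{\diam(T)}\right\},$$
combined with the structural inequalities of Lemma~\ref{bound-sib}. For part (a), I would use $d_1-bad(T)\ge 2+\sum_{i\ge4}d_i$ (Lemma~\ref{bound-sib}(a)); if this quantity is $\Omega(\sqrt n)$ we are done, otherwise $\sum_{i\ge4}d_i=o(\sqrt n)$, so $n=d_1+d_2+d_3+\sum_{i\ge4}d_i$ forces $d_1+d_2+d_3$ to carry almost all of $n$. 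Since a large $d_1$ already gives $2h(T)\ge d_1-bad(T)=d_1-o(\sqrt n)$, the remaining case is that $d_2+d_3$ is large, i.e.\ $\Omega(n)$ vertices have degree $2$ or $3$; a tree whose vertices are overwhelmingly of degree $\le 3$ has a long path, so $\diam(T)=\Omega(\sqrt n)$ and the $\diam(T)+1$ term in Lemma~\ref{lem-main} finishes it. (To make the "long path" step precise I would again balance $n/\diam(T)$ against $\diam(T)$.)

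For part (b), the hypothesis $bad_3(T)=o(\sqrt n)$ together with $bad(T)=\Omega(\sqrt n)$ means there are $\Omega(\sqrt n)$ bad vertices of degree $\ge4$, hence $\sum_{i\ge4}d_i=\Omega(\sqrt n)$; then Lemma~\ref{bound-sib}(a) gives $d_1-bad(T)\ge 2+\sum_{i\ge4}d_i=\Omega(\sqrt n)$, and Lemma~\ref{lem-main} applies directly. For part (c), $bad_l(T)=o(n)$ is exactly the hypothesis that makes the last term of Lemma~\ref{lem-main}, $(n-bad_l(T))/\diam(T)$, essentially $n/\diam(T)$; balancing this against $\diam(T)+1$ (both are lower bounds for $2h(T)$) yields $2h(T)=\Omega(\sqrt n)$, since $\max\{\diam+1,\ (n-o(n))/\diam\}\ge\sqrt{n-o(n)}$.

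For the explicit bound $h(T)\ge(n-d_2-4d_3)/6$: I would start from $2h(T)\ge d_1-bad(T)$, bound $bad(T)\le d_3+\sum_{i\ge4}d_i$, and then use $d_1\ge 2+\sum_{i\ge4}2d_i$ from Lemma~\ref{bound-sib}, so $2h(T)\ge 2+\sum_{i\ge4}d_i-d_3$. Writing $\sum_{i\ge4}d_i=n-d_1-d_2-d_3$ and using Lemma~\ref{bound-sib}(b,c) to replace $d_1$ and $\sum_{i\ge4}d_i$ by their bounds in terms of $n-d_2-d_3$, I expect to land on $2h(T)\ge \tfrac13(n-d_2-d_3)-d_3=\tfrac13(n-d_2-4d_3)$, giving the claimed $h(T)\ge(n-d_2-4d_3)/6$. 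The bound $h(T)\ge(n-d_1-d_2)/(\diam(T)+1)$ should come from Construction~\ref{c-apy} applied after a counting argument on vertices of degree $\ge3$: deleting the $d_1$ leaves and the degree-$2$ vertices leaves the branch vertices, and a path of length roughly $(n-d_1-d_2)/(\text{number of branch pieces})$ must exist — more directly, any tree with $n-d_1-d_2$ vertices of degree $\ge3$ spread along a structure of diameter $\diam(T)$ contains, by a Dilworth/averaging argument as in Lemma~\ref{lem-main}, an antichain or a path realizing the bound; I would phrase it via the chain decomposition of $P(T,r)$ so that some chain (a path in $T$) has at least $(n-d_1-d_2)/(\diam(T)+1)$ of the relevant vertices, then apply Construction~\ref{c-apy}.

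The main obstacle I anticipate is the "many degree-$2$ or degree-$3$ vertices $\Rightarrow$ large diameter" implication used in part (a): it is true but needs care, because those vertices can be arranged in many short paths hanging off a small core rather than one long path. The clean way around this is not to argue directly for large diameter but to observe that if $\diam(T)$ is small then $n/\diam(T)$ is large, and $2h(T)\ge n/\diam(T)-bad(T)$ with $bad(T)=o(\sqrt n)$ already handles it; so the real content is just optimizing $\max\{\diam(T)+1,\ n/\diam(T)-o(\sqrt n)\}$, which is minimized near $\diam(T)\approx\sqrt n$ and gives the $\Omega(\sqrt n)$ bound without ever needing a structural "long path" claim.
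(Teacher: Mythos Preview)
Your arguments for parts (a)--(c) and for the bound $h(T)\ge(n-d_2-4d_3)/6$ are essentially the paper's: in each case one balances $\diam(T)+1$ against the appropriate term from Lemma~\ref{lem-main} (respectively $n/\diam(T)-bad(T)$, $d_1-bad(T)$, and $(n-bad_l(T))/\diam(T)$), and the computation for $(n-d_2-4d_3)/6$ via Lemma~\ref{bound-sib}(b),(c) is exactly what you outline. Your self-correction for (a) at the end is the right move --- no structural ``long path'' claim is needed, only $\max\{\diam(T)+1,\ n/\diam(T)-o(\sqrt n)\}=\Omega(\sqrt n)$.

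The gap is in the bound $h(T)\ge(n-d_1-d_2)/(\diam(T)+1)$. Your plan is to pigeonhole the $n-d_1-d_2$ vertices of degree at least~$3$ into \emph{chains} of $P(T,r)$ and then apply Construction~\ref{c-apy} to a chain carrying many of them. This fails on two counts. First, by Dilworth the minimum number of chains equals the width of $P(T,r)$, which is $d_1$, not $\diam(T)+1$; so the pigeonhole does not produce a chain with $(n-d_1-d_2)/(\diam(T)+1)$ high-degree vertices. Second, even if it did, Construction~\ref{c-apy} yields homometric sets of size half the \emph{length} of the path, regardless of how many degree-$\ge 3$ vertices sit on it; since any chain has length at most $\diam(T)$, you would only recover the trivial $h(T)\ge\diam(T)/2$.

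The paper pigeonholes into \emph{antichains} instead --- the distance levels $N_i(r)$, of which there are at most $\diam(T)+1$ --- to obtain a level containing a set $L$ of at least $(n-d_1-d_2)/(\diam(T)+1)$ vertices of degree $\ge 3$. The crucial extra step you are missing is then to pass to the set $L'$ of \emph{children} of $L$: each vertex of $L$ has at least two children, so $L'$ is an antichain of siblings with $|L'|\ge 2|L|$ and $k'(L')\le|L|$. Now Construction~\ref{balanced-ptn} (not Construction~\ref{c-apy}) gives $2h(T)\ge|L'|-k'(L')\ge|L|$, which is the desired bound.
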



\begin{com}
 There are trees on $n$ vertices, such as double haircombs, 
for which our proof techniques do not provide 
bounds of the order of magnitude $\sqrt{n}$. 
A {\it double haircomb}  is constructed  from a haircomb  by attaching 
vertex-disjoint paths to the vertices of the legs, see Figure~\ref{dhcomb}.  
By appropriately choosing the distances between the vertices of degree $3$, 
one can construct an $n$-vertex double haircomb 
$T$, such that  $bad_3(T)   \geq c\sqrt{n}$,  $bad_l(T)\geq c'n$ for some constants $c, c'>0$ and 
$\diam(T) = o(\sqrt{n}).$ 
 \end{com}

\begin{figure}[h]
\begin{center}
\vspace{-.4cm}
\includegraphics[scale=0.4]{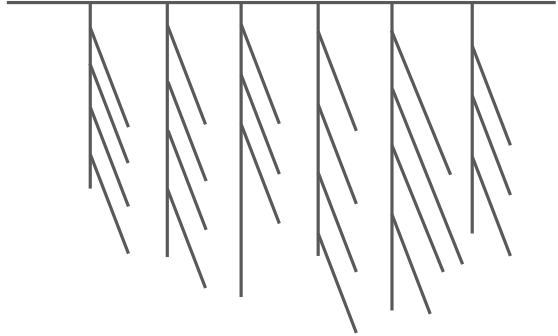}
\vspace{-2.4cm}
\caption{{\small A double haircomb.}}
\label{dhcomb}
\end{center}
\end{figure}

\begin{proof}[Proof of Theorem~\ref{general-trees}]

To prove the first part of the theorem, we assume throughout the proof that $\diam(T) = o(\sqrt{n})$, 
otherwise by Lemma \ref{lem-main}, $h(T) =\Omega(\sqrt{n})$.  

\begin{itemize}
\item{}  $bad(T) = o(\sqrt{n}).$ 
By Lemma~\ref{lem-main},  
$2h(T) \ge n/\diam(T) - bad(T) = \Omega(\sqrt{n})$.

\item{}  $bad_3(T) = o(\sqrt{n})$ and $bad(T)=\Omega(\sqrt{n}).$ \\
By Lemma \ref{bound-sib}, $d_1 - bad(T)\ge \sum_{i\ge 4} d_i \ge bad(T) - bad_3(T) = \Omega(\sqrt{n}).$ 
Lemma  \ref{lem-main} implies that $2h(T) =  \Omega(\sqrt{n})$.

\item{} $bad_l(T) = o(n).$ 
By Lemma~\ref{lem-main}, 
$h(T) \ge (n - bad_l(T))/\diam(T) = \Omega(\sqrt{n})$. 
\end{itemize}

To prove  the second part of the theorem,  we use Lemma~\ref{lem-main} and 
Lemma \ref{bound-sib}-(b),(c). 
We have that 
$2h(T)\ge d_1 - bad(T) \ge (d_1 -\sum_{i\ge 4} d_i) - d_3 
\ge 1/3 (n - d_2 - d_3) - d_3 = 1/3 (n - d_2 - 4d_3) $.

Let $S$ be the set of vertices with degree at least $3$ in $T$,   $|S| = n- (d_1+d_2)$. 
Let $r$ be a leaf in $T$, $V(T) = N_0 \cup N_1\cup N_2 \cup \dots \cup N_t$, 
where $N_0 = \{r\}$, $N_i=N_i(r)$ and $t = \lceil \diam(T)/2 \rceil$. 
There is an $i\in [t]$  such that $|N_i(r)\cap S|  \ge |S|/t$. 
Let $L=N_i(r)\cap S$.   
We have $|L| \ge (n - d_1 - d_2)/t \ge 2(n-d_1-d_2)/(\diam(T)+1).$ 
Let the set of children of $L$ with respect to $P(T,r)$ be $L'$.  
Since each vertex in $S$ and thus in $L$ has degree at least three,  $|L'|\ge 2|L|$. 
Using Lemma~\ref{lem-main} and the fact that $k'(L')\le |L|$, 
we have  $h(T)\ge (|L'| - k'(L'))/2 \ge (2|L| - |L|)/2 = |L|/2\ge (n-d_1-d_2)/(\diam(T)+1).$
\end{proof}

\section{Concluding Remarks}

The definition of homometric sets allows for little control over what pairs of vertices realize what distance.
This makes proving the upper bounds on $h(n)$ difficult.  One may consider another definition for two sets being homometric.  
For a graph $G$,  let  $K(G)=(V(G), c)$  be a complete graph on vertex set $V(G)$ with an edge-coloring $c$, where 
$c(u,v)$ is equal to the distance between $u$ and $v$ in $G$.  
Let two disjoint sets $S_1,S_2\subset V(G)$ be {\it similar} if 
there is an isomorphism between $K(G)[S_1]$ and $K(G)[S_2]$. 
Note that there may be homometric sets that are not similar. 
Let $T$ be a spider with four long legs of equal length, let 
 $S_1$ consist of four vertices on distinct legs with distances 2,2,2,6
to the head of the spider, respectively, 
let $S_2$ be the set of four vertices - one is the head, three other are on three distinct
legs with distance $4$ to the head.  Although $D(S_1) = D(S_2)= \{4,4,4, 8,8,8\}$, 
$S_1$ and $S_2$ are not similar, since  there is a triangle  with all edges colored $4$ in $K(G)[S_1]$ and
there is a triangle with all edges  colored $8$ in $K(G)[S_2]$.

\section{Acknowledgements}
The authors thank Michael Young 
for pointing out one of the special cases in Theorem~\ref{spiders}.

\bibliographystyle{amsplain}
\bibliography{hom-sets-11-24-11}

\providecommand{\bysame}{\leavevmode\hbox to3em{\hrulefill}\thinspace}
\providecommand{\MR}{\relax\ifhmode\unskip\space\fi MR }
\providecommand{\MRhref}[2]{%
  \href{http://www.ams.org/mathscinet-getitem?mr=#1}{#2}
}
\providecommand{\href}[2]{#2}
\begin{thebibliography}{1}

\bibitem{APY}
M.~Albertson, J.~Pach, and M.~Young, \emph{Disjoint homometric sets in graphs},
  submitted.

\bibitem{bollobas}
B.~Bollob{\'a}s, \emph{Modern graph theory}, Graduate Texts in Mathematics,
  vol. 184, Springer-Verlag, New York, 1998.

\bibitem{caroyuster}
Y.~Caro and R.~Yuster, \emph{Large disjoint subgraphs with the same order and
  size}, European J. Combin. \textbf{30} (2009), no.~4, 813--821.

\bibitem{karolyi}
G.~K{\'a}rolyi, \emph{Balanced subset sums in dense sets of integers}, Integers
  \textbf{9} (2009), A45, 591--603.

\bibitem{LSS}
P.~Lemke, S.~S. Skiena, and W.~D. Smith, \emph{Reconstructing sets from
  interpoint distances}, Discrete and Comput. Geom., Algorithms Combin.,
  vol.~25, Springer, Berlin, 2003, pp.~507--631.

\bibitem{lovasz}
L.~Lov{\'a}sz, \emph{Kneser's conjecture, chromatic number, and homotopy}, J.
  Combin. Theory Ser. A \textbf{25} (1978), no.~3, 319--324.

\bibitem{matousek}
J.~Matou{\v{s}}ek, \emph{Thirty-three miniatures}, Student Mathematical
  Library, vol.~53, American Mathematical Society, Providence, RI, 2010,
  Mathematical and algorithmic applications of linear algebra.

\bibitem{RS}
J.~Rosenblatt and P.~D. Seymour, \emph{The structure of homometric sets}, SIAM
  J. Algebraic Discrete Methods \textbf{3} (1982), no.~3, 343--350.

\bibitem{west}
D.~B. West, \emph{Introduction to graph theory}, Prentice Hall Inc., Upper
  Saddle River, NJ, 1996.

\end{thebibliography}

\end{document}